\newtheorem{theorem}{Theorem}[section]
\newtheorem{proposition}[theorem]{Proposition}
\newtheorem{lemma}[theorem]{Lemma}
\newtheorem{definition}[theorem]{Definition}
\newtheorem{corollary}[theorem]{Corollary}
\newcommand{\R}{\mathbb R}
\newcommand{\T}{\mathbb T}
\newcommand{\eps}{\varepsilon}
\newcommand{\dd}{\, \mathrm{d}}
\newcommand{\vv}{\langle v\rangle}
\newcommand{\supp}{\mbox{supp} \ }
\numberwithin{equation}{section}
\title{Self-generating lower bounds and continuation for the %non-cutoff
 Boltzmann equation}
\author{Christopher Henderson}
\address{Department of Mathematics, University of Arizona, Tucson, AZ 85721}
\email{ckhenderson@math.arizona.edu}
\author{Stanley Snelson}
\address{Department of Mathematical Sciences, Florida Institute of Technology, Melbourne, FL 32901}
\email{ssnelson@fit.edu}
\author{Andrei Tarfulea}
\address{Department of Mathematics, Louisiana State University, Baton Rouge, LA 70803}
\email{tarfulea@lsu.edu}
\thanks{CH was partially supported by NSF grant DMS-2003110. SS was partially supported by a Ralph E. Powe Award from ORAU. AT was partially supported by NSF grant DMS-1816643}
\begin{document}

\maketitle

\begin{abstract}
%	{\color{blue} I'm not sure if I'm happy with this title. Any other suggestions?}
%	{\color{OliveGreen} How about ``Dynamic mass spreading and (coercive) lower bounds for the (non-cutoff) Boltzmann equation''?} {\CH ``Self-generating lower bounds and continuation for the Boltzmann equation''}
%	
	For the spatially inhomogeneous, non-cutoff Boltzmann equation posed in the whole space $\mathbb R^3_x$, we establish pointwise lower bounds that appear instantaneously even if the initial data contains vacuum regions. Our lower bounds depend only on the initial data and upper bounds for the mass and energy densities of the solution. As an application, we improve the weakest known continuation criterion for large-data solutions, by removing the assumptions of mass bounded below and entropy bounded above.
\end{abstract}

\section{Introduction}

The Boltzmann equation is a kinetic equation arising in statistical physics.  Its solution $f(t,x,v)\geq 0$ models the density of particles of a diffuse gas at time $t\in [0,T]$, at location $x\in \R^3$, and with velocity $v\in \R^3$. The equation reads
\begin{equation}\label{e:Boltzmann}
\partial_t f + v\cdot \nabla_x f = Q(f,f) = \int_{\R^3}\int_{{\mathbb S}^2} B(v-v_*,\sigma)
\left( f(v_*')f(v') - f(v_*)f(v) \right) \dd\sigma \dd v_*,
\end{equation}
where $v$ and $v_*$ are post-collisional velocities, and $v'$ and $v_*'$ are pre-collisional velocities, given by
\[
v' = \frac{v+v_*}{2} + \sigma \frac{|v-v_*|}{2}
	\qquad\text{ and }\qquad
v_*' = \frac{v+v_*}{2} - \sigma \frac{|v-v_*|}{2}.
\]
In this article, we focus on the non-cutoff version of \eqref{e:Boltzmann} that includes the physically realistic singularity at grazing collisions. The collision kernel is given by
\[
	B(v-v_*,\sigma) = |v-v_*|^\gamma \theta^{-2-2s} \tilde b(\cos \theta),
		\qquad \text{ where }
	\cos \theta = \sigma \cdot \frac{v-v_*}{|v-v_*|},~
	\gamma > -3,~
	s\in(0,1),
\]
and $\tilde b$ a positive bounded function. 

The main purpose of this article is to prove that pointwise lower bounds for $f$ appear instantaneously, under rather weak assumptions on both the solution and the initial data.   %\sout{Unlike the majority of lower bound results for \eqref{e:Boltzmann},} 
We make no \emph{a priori} assumption of positive mass, except at $t=0$, where uniform positivity in some small ball in $(x,v)$ space is required (but otherwise vacuum regions may exist). The constants in our lower bounds depend only on the initial data and zeroth-order norms of the solution (see \eqref{e:hydro} below).

On physical grounds, gases modeled by \eqref{e:Boltzmann} should be expected to fill vacuum regions instantaneously, so it is important to establish this property under as few assumptions as possible. On a mathematical level, lower bounds for $f$ grant nice coercivity properties to the collision operator $Q(f,f)$, which are a key ingredient of the regularity and existence theory for \eqref{e:Boltzmann} (see, e.g.,~\cite{imbert2020regularity}). Two specific applications we have in mind are:

\begin{enumerate}
\item[(a)] \emph{Continuation.} The recent result of Imbert-Silvestre \cite{imbert2019smooth} (which finished a long program of the two authors and Mouhot, see \cite{silvestre2016boltzmann, imbert2018decay, imbert2018schauder, imbert2019lowerbounds}) showed that solutions to \eqref{e:Boltzmann} can be continued for as long as the mass, energy, and entropy densities of $f$ are bounded above, and the mass density is bounded below, when $\gamma +2s \in [0,2]$. Our main theorem implies that the lower mass bound and upper entropy bound are not needed, reducing the number of required conditions from four to two. See Corollary \ref{c:continuation} below.

\item[(b)] \emph{Local existence.} For the closely related Landau equation, we have shown \cite{HST2019rough} that classical solutions can be constructed for very irregular initial data (bounded and measurable) with pointwise polynomial decay in $v$ of order 5. Pointwise lower bounds for the Landau equation (proven in \cite{HST2018landau}) played an important role in the proof, and we expect Theorem \ref{t:main} to play an analogous role in proving existence of solutions with low regularity initial data for the Boltzmann equation. We plan to explore this question in a future article.

\end{enumerate}

Let us state our results precisely. We work with classical solutions, i.e.~functions $f$ that are $C^1$ in $(t,x)$, $C^2$ in $v$, and satisfy \eqref{e:Boltzmann} in a pointwise sense on $[0,T]\times \R^3 \times \R^3$, with $f(0,x,v) = f_{\rm in}(x,v)$. Our conditional assumptions on $f$ are
\begin{equation}\label{e:hydro}
\begin{split}
&\sup_{t\in [0,T],x\in \R^3} \int_{\R^3} \left(1+ |v|^{\max\{2,\gamma+2s\}}\right) f(t,x,v) \dd v \leq K_0, \qquad \text{ and}\\
&\sup_{t\in [0,T],x\in \R^3} \| f(t,x,\cdot)\|_{L^p(\R^3)} \leq P_0
	\quad \text{ for some } p>\frac{3}{3+\gamma+2s} \quad (\mbox{only if } \gamma + 2s < 0).
%\sup_{t,x} \int_{\R^3} |v|^{\gamma+2s} f(t,x,v) \dd v &\leq G_0 \quad (\mbox{only necessary if } \gamma + 2s > 2),
\end{split}
\end{equation}
For $\gamma + 2s\in [0,2]$, condition \eqref{e:hydro} means the mass and energy densities of $f$ are uniformly bounded.\footnote{In the case $\gamma +2s > 2$, it is known that $f$ enjoys a self-generating bound on $\int_{\R^3} |v|^{\gamma+2s} f \dd v$ in terms of the mass, energy, and entropy densities \cite{cameron2019boltzmann}, but this result requires qualitative assumptions on the solution $f$  (pointwise polynomial decay in $v$ of high order) that we do not make in this article.}  These conditional assumptions grant us some control of the collision operator (see~\Cref{l:Q-s-C2}).  %It will be seen below that $Q(f,f)$ is well-defined for $f\in C^2_v$ satisfying \eqref{e:hydro}. {\CH ????  IS THIS TRUE, THOUGH?  I THINK WE DO NOT ADDRESS THE ISSUE WITH THE SINGULARITY IN CONVOUTION DEFINING $Q_{\rm ns}(f,f)$}

We  assume there are some $\delta, r>0$ and $(x_0,v_0) \in \R^3\times\R^3$ such that the initial data satisfies
\begin{equation}\label{e:mass-core}
	 f_{\rm in} \geq \delta 1_{B_r(x_0)\times B_r(v_0)}.
\end{equation}
Any continuous $f_{\rm in}\not\equiv 0$ satisfies \eqref{e:mass-core}.  A slightly stronger hypothesis, which always follows from~\eqref{e:mass-core} in the spatially periodic case, is
\begin{definition}\label{d:well}
	A function $g: \R^3 \times \R^3 \to [0,\infty)$ is \emph{well-distributed with parameters $R, \delta, r>0$} if, for every $x \in \R^3$, there exists $x_m \in B_{R}(x)$ and $v_m \in B_R(0)$ such that $g \geq \delta 1_{B_r(x_m)\times B_r(v_m)}$.
\end{definition}

Our main result is:
\begin{theorem}\label{t:main}
Let $\gamma\in (-3,1)$ and $s\in (0,1)$.  Suppose that $f:[0,T]\times \R^3\times\R^3 \to \R_+$ is a solution of \eqref{e:Boltzmann} satisfying \eqref{e:hydro} and whose initial data $f(0,\cdot,\cdot) = f_{\rm in}$ satisfies~\eqref{e:mass-core}.  Then% $f$ satisfies lower bounds of the form
\[
	f(t,x,v)
		\geq \mu(t,x) e^{-\eta(t,x)|v|^2}
		\qquad (t,x,v) \in [0,T] \times \R^3 \times \R^3,
\]
where the functions  $\mu(t,x), \eta(t,x)$ are uniformly positive and bounded on any compact subset of $(0,T]\times \R^3$ and %.  The functions $\mu$ and $\eta$ 
depend only on $T$, the constants $K_0$ and $P_0$ in~\eqref{e:hydro}, and $v_0$, $\delta$, and $r$ in~\eqref{e:mass-core}.

If $f_{\rm in}$ is well-distributed with parameters $R$, $\delta$, $r>0$, then $\mu$ and $\eta$ are independent of $x$ and are uniformly positive and bounded on any compact subset of $(0,T]$. % the form $[t_0,T]\times \R^3$ where $t_0>0$.  
In this case, $\mu$ and $\eta$ depend only on $T$, the constants $K_0$ and $P_0$ in~\eqref{e:hydro}, and the constants $R$, $\delta$, and $r$ in~\Cref{d:well}.
%, i.e. for any such compact $K$, there are constants $\mu_K$, $\eta_K > 0$ depending on $K$, $|v_0|$, $\delta$, $r$, $M_0$, $E_0$, $P_0$, and $G_0$, such that $f(t,x,v) \geq \mu_K e^{-\eta_K |v|^2}$ in $K$.

%If the initial data satisfies \eqref{e:mass-core}, then $f$ satisfies lower bounds of the form
%	\[ f(t,x,v) \geq \mu(t,x) e^{-\eta(t,x)|v|^2}, \quad 0\leq t\leq T,\]
%	where the functions  $\mu(t,x), \eta(t,x) >0$ are uniformly positive on any compact subset of $(0,T]\times \R^3$, i.e. for any such compact $K$, there are constants $\mu_K$, $\eta_K > 0$ depending on $K$, $|v_0|$, $\delta$, $r$, $M_0$, $E_0$, $P_0$, and $G_0$, such that $f(t,x,v) \geq \mu_K e^{-\eta_K |v|^2}$ in $K$.
%	
%	If, in addition, $f_{\rm in}$ is well-distributed with parameters $\delta$, $r$, and $R$, then for any $t_0\in (0,T)$, %compact subset $K_{t,x} \subset (0,T]\times \R^3$, 
%	there are constants $\mu_0, \eta_0 >0$ depending on $t_0$, $T$, $\delta$, $r$, $R$, $M_0$, $E_0$, $P_0$, and $G_0$, such that 
%\[ f(t,x,v) \geq \mu_0 e^{-\eta_0 |v|^2}, \quad t\in [t_0,T], x\in \R^3, v\in \R^3.\]
\end{theorem}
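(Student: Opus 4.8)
\emph{Proof proposal.} The starting point is the decomposition of the collision operator available under \eqref{e:hydro} (see \Cref{l:Q-s-C2} and the discussion around it): one writes $Q(f,f) = \mathcal L_{K_f} f + c_f f$, where $\mathcal L_{K_f}$ is an integro-differential operator of order $2s$ with a \emph{nonnegative} kernel $K_f(v,v')$ and $c_f \ge 0$ is a scalar comparable to $\int_{\R^3}|v-v_*|^\gamma f(v_*)\dd v_*$. The role of \eqref{e:hydro} is that it controls $K_f$ through two quantitative estimates depending on \emph{only} $K_0$ and (when $\gamma+2s<0$) $P_0$: an upper tail bound, schematically $\int_{|v'-v|>\rho}K_f(v,v')\dd v'\lesssim\rho^{-2s}\langle v\rangle^{\gamma+2s}$, and a coercivity estimate, $K_f(v,v')\gtrsim|v-v'|^{-3-2s}$ on a cone of directions around $v$, valid at those $v$ near which $f(x,\cdot)$ carries a definite amount of mass, with constant governed by that local mass. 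Since $c_f\ge0$, dropping the zeroth-order term shows $f$ is a supersolution of the kinetic integro-differential equation $\partial_t g+v\cdot\nabla_x g=\mathcal L_{K_f}g$, to which we apply a comparison principle against explicit subsolution barriers.

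The barrier scheme has two ingredients. First an \emph{initial layer}: using \eqref{e:mass-core}, construct a smooth bump, supported near $\{x\approx x_0+tv,\ v\approx v_0\}$ and vanishing on its boundary, that lies below $f$ for $t\in(0,t_1]$, yielding a quantitative bound $f(t,\cdot)\ge\kappa_0$ on a phase-space ball $B_{r_1}(x_1(t))\times B_{r_1}(v_0)$. Second, a \emph{spreading step}: if $f(t_0,\cdot)\ge\kappa$ on $B_R(\bar x)\times B_\rho(\bar v)$, then for $t\in(t_0,t_0+\tau]$ one has $f(t,\cdot)\ge\lambda\kappa$ on a ball whose velocity center has moved by up to $c\sqrt\tau$, whose spatial center has drifted by the transport $\bar v\,\tau$, and whose spatial radius has shrunk by a controlled amount, with $\lambda\in(0,1)$, $c$, $\tau$ depending only on $K_0,P_0$. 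Both steps are proved by the comparison principle, and here lies the main obstacle: the coercivity of $K_f$ that drives the barriers is available precisely on the region where a lower bound for $f$ is \emph{already} known, so each new region must be taken overlapping the known one, and the barriers engineered accordingly — this is why an off-the-shelf Harnack inequality does not suffice. One must also check that the (nonsmooth or compactly supported) barrier interacts correctly with the nonlocal operator, controlling $\mathcal L_{K_f}$ of the barrier including the contributions of far-away regions.

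The Gaussian comes from iterating the spreading step. Fix $t$ in a compact subset of $(0,T]$ and a target $(x,v)$; partition $[t_1,t]$ into $N\sim|v|^2/t$ intervals of length $\tau\sim t^2/|v|^2$ and chain spreading steps so that the velocity centers march from $v_0$ to $v$ in increments of size $\sim\sqrt\tau\sim t/|v|$ (so that $N$ increments cover the distance $\sim|v|$), while the spatial centers are steered toward $x$ using the available transport. Each step costs a factor $\lambda$, so the resulting constant is $\kappa_0\lambda^N\sim\kappa_0 e^{-c|v|^2/t}$, and after renaming we obtain $f(t,x,v)\ge\mu(t,x)e^{-\eta(t,x)|v|^2}$. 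In general $\mu$ and $\eta$ inherit dependence on $x$ (through how much transport is needed to reach $x$ from $x_0$) and on $v_0$ (through the number of steps needed to march the velocity center past the origin and out to $v$), but on compact subsets of $(0,T]\times\R^3$ they are controlled by $T,K_0,P_0,v_0,\delta,r$ and blow up like $1/t$ as $t\to0$. We emphasize that removing the lower mass bound and the upper entropy bound of \cite{imbert2019lowerbounds, imbert2019smooth} is \emph{exactly} the assertion that the two estimates on $K_f$ above require nothing beyond \eqref{e:hydro}; establishing those estimates quantitatively is the technical heart.

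Finally, the well-distributed refinement follows from the $x$-translation invariance of \eqref{e:Boltzmann}. Given a target $(t,x,v)$ with $t$ in a fixed compact subset of $(0,T]$, \Cref{d:well} supplies a core $B_r(x_m)\times B_r(v_m)$ with $x_m\in B_R(x)$ and $v_m\in B_R(0)$; translating the whole configuration by $-x$, this becomes a core contained in $B_R(0)\times B_R(0)$, so running the construction above produces a lower bound at $(t,0,v)$, equivalently at $(t,x,v)$, whose constants depend only on $T,K_0,P_0,R,r,\delta$ and not on $x$ — the bounded spatial distance $\le R$ is covered by the bounded velocities involved once $t$ is bounded below. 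Taking the infimum over $x$ of the resulting $\mu$'s and the supremum of the $\eta$'s — finite, and the first strictly positive, by this uniformity together with the explicit time dependence — yields $x$-independent functions $\mu(t),\eta(t)$, uniformly positive and bounded on compact subsets of $(0,T]$, as claimed.
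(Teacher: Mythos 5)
There is a genuine gap, and it sits exactly at the step you label the ``technical heart'': the spreading lemma as you state it (velocity center advances by $c\sqrt{\tau}$ per step with a loss factor $\lambda\in(0,1)$ and constants depending \emph{only} on $K_0,P_0$) is not available under \eqref{e:hydro} alone, and the Gaussian does not follow from the chaining you build on it. The coercivity of $K_f(v,\cdot)$ at a velocity $v$ is anchored to the region where $f$ is \emph{already} known to be bounded below: quantitatively (cf.\ \Cref{l:coercive}) a core $f\geq\delta 1_{B_r(v_0)}$ yields $K_f(v,v')\gtrsim \delta r^2(1+|v|)^{1+\gamma+2s}|v-v'|^{-3-2s}$ only on a set of directions of measure $\sim(1+|v|)^{-1}$, so the per-step constant cannot be uniform in $|v|$, and, worse, it is proportional to the lower bound currently being propagated. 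This makes the honest per-step recursion quadratic, $\kappa\mapsto c\,\kappa^2$, not linear $\kappa\mapsto\lambda\kappa$: with your $N\sim|v|^2/t$ steps of fixed velocity increment this produces a loss like $\kappa_0^{2^N}$ (doubly exponential in $|v|^2$), while even if one could somehow make the gain linear, the polynomial degeneracy in $|v|$ of the cone and kernel bounds would give at best $\exp(-C|v|^2\log\langle v\rangle)$ -- in neither case the claimed Gaussian. (Also, $\sqrt\tau$ is the diffusive scaling of a second-order local operator, not of the order-$2s$ operator $Q_{\rm s}(f,\cdot)$; treating $f$ merely as a supersolution of the linear equation $\partial_t g+v\cdot\nabla_x g=\mathcal{L}_{K_f}g$ discards the bilinear structure, and the paper's Landau comparison in Section 1.5 shows the Gaussian genuinely \emph{fails} when that structure is absent, so no purely linear-comparison chain can deliver it.)

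The paper's mechanism is different and is what you would need to recover: in Lemma \ref{l:spread}, at a first crossing point with the barrier one bounds $Q_{\rm s}(f,f-h)$ from below using the known bound $\ell$ \emph{twice} -- once through $f(v_*')\geq\ell$ inside the Carleman kernel and once through $f-h\geq\ell/2$ on $B_R$ -- yielding a gain $\gtrsim\xi^qR^{3+\gamma}\ell^2$ while the lower bound spreads from $B_R$ to $B_{\sqrt2(1-\xi)R}$, i.e.\ the radius grows by a fixed \emph{factor} per step thanks to the nonlocality. Proposition \ref{p:gaussian} then iterates with $R_n\sim2^{n/2}$, so only $O(\log(1+|v|^2))$ squarings are needed to reach velocity $|v|$, and $\ell_n\geq u^{2^n}=u^{cR_n^2}$ gives precisely $e^{-c|v|^2}$, with the polynomial degeneracies absorbed into $u$. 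Your outline of the initial layer, the transport of lower bounds in $x$, and the translation-invariance argument for the well-distributed case are all consistent with the paper (Lemma \ref{l:push} and Steps 1, 3--5 of the proof), but without the quadratic-gain/geometric-radius iteration the central claim $f\geq\mu e^{-\eta|v|^2}$ is not established.
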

Two remarks on the theorem statement are in order. First, the Gaussian asymptotics in $v$ are optimal, as many short-time and close-to-equilibrium solutions are known to satisfy Gaussian upper bounds. Second, it can be seen from the proof that the dependence of our lower bounds on $T$ is mild---in other words, $\mu$ cannot degenerate to $0$ and $\eta$ cannot tend to infinity for any finite $T$, provided the bounds~\eqref{e:hydro} continue to hold. 
%$\mu_0$ and $\nu_0$ cannot degenerate to 0 for any finite $T$, provided the bounds \eqref{e:hydro} continue to hold. 
This is especially important in the following corollary.
 
As an application 
% case $x\in \mathbb T^3$ (i.e.~space periodic periodic boundary conditions) 
we improve the criteria of \cite{imbert2019smooth} for smoothing and continuation in the spatially periodic case by removing the lower bound on the mass and the upper bound on the entropy.

\begin{corollary}\label{c:continuation} 
	Let $f$ be a solution of~\eqref{e:Boltzmann} on $[0,T]\times\T^3\times\R^3$ with initial data $f(0,\cdot,\cdot) = f_{\rm in}$ satisfying~\eqref{e:mass-core}.  Let $\gamma + 2s \in [0,2]$.
\begin{enumerate}
	\item[(a)]  %Let $\gamma + 2s \in [0,2]$. %, and let $f$ be a solution of~\eqref{e:Boltzmann} on $[0,T]\times \mathbb T^3\times \R^3$, 
	Suppose that $f_{\rm in}$ is $C^\infty$ in $(t,x,v)$ and Schwartz class in $v$. If $f$ satisfies the hydrodynamic bounds \eqref{e:hydro}, then, for any $t_0 \in (0,T)$, $q>0$, and any $k$-th order derivative $D^k$ in $(t,x,v)$,%$f$ satisfies regularity estimates of the form
	\[
		\|(1+|v|)^q D^k f\|_{L^\infty([t_0,T]\times \mathbb T^3\times \R^3)}
			\leq C_{q,k,t_0}.
	\]
	The constant $C_{q,k,t_0}$ depends only on $T$, $q$, $k$, $t_0$, $\gamma$, $s$, $K_0$,  and the constants in~\eqref{e:mass-core}.
	\item[(b)] Assume $\gamma \leq 0$ if $s \in (0,1/2)$ and $\gamma < 0$ if $s\in[1/2,1)$. %If $s\in(0,1/2)$, assume $\gamma\in [-2s,0]$. If $s\in [1/2,1)$, assume $\gamma \in ({\CH \{\max\{-3,-(3/2)-2s\}},0)$.
	%Let $f$ be a solution of~\eqref{e:Boltzmann} on $[0,T)\times \mathbb T^3\times \R^3$ with initial data $f(0,\cdot,\cdot)=f_{\rm in}$ satisfying $\vv^\ell \partial_x^\alpha\partial_v^\beta f_{\rm in} \in L^\infty(\mathbb T^3, \R^3)$ for all $\ell\geq 0$ and all multi-indices $\alpha, \beta$. Assume that $f_{\rm in}$ also satisfies \eqref{e:mass-core}. 
	Suppose that $\vv^\ell \partial_x^\alpha\partial_v^\beta f_{\rm in} \in L^\infty(\mathbb T^3, \R^3)$ for all $\ell\geq 0$ and all multi-indices $\alpha, \beta$.  If $f$ cannot be extended to a solution on $[0,T+\eps)$ for any $\eps>0$, then either
	\[
		\lim_{t\to T-} \sup_{x\in \mathbb T^3} \int_{\R^3} f(t,x,v)\dd v = \infty
			\quad \mbox{ or } \quad
		\lim_{t\to T-} \sup_{x\in \mathbb T^3} \int_{\R^3}|v|^2 f(t,x,v)\dd v = \infty.
	\]
\end{enumerate}
\end{corollary}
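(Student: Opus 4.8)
The plan is to feed Theorem~\ref{t:main} into the regularity and continuation program of \cite{silvestre2016boltzmann,imbert2018decay,imbert2018schauder,imbert2019smooth}, exploiting the simple observation that a pointwise Gaussian lower bound is stronger than, and can substitute for, the two hypotheses ``mass density bounded below'' and ``entropy density bounded above'' used there. In those works both hypotheses enter only through two-sided bounds on the collision kernel in the Carleman-type representation of $Q(f,f)$, and the bound $f\ge\mu e^{-\eta|v|^2}$, together with the upper moment bounds of~\eqref{e:hydro}, supplies those kernel bounds directly.

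For part (a): on $\T^3$ the mass-core hypothesis~\eqref{e:mass-core} automatically upgrades to the well-distributed condition of Definition~\ref{d:well} (with $\delta,r$ unchanged and $R$ depending on the period and on $|v_0|+r$), so Theorem~\ref{t:main} gives $f(t,x,v)\ge\mu(t)e^{-\eta(t)|v|^2}$ on $(0,T]\times\T^3\times\R^3$, with $\mu,\eta$ uniformly positive and bounded on $[t_0/2,T]$ for each $t_0\in(0,T)$ and depending only on $T$, $K_0$, $\gamma$, $s$ and the constants in~\eqref{e:mass-core}. Integrating in $v$ yields $\int_{\R^3}f(t,x,v)\dd v\ge c_0\,\mu(t)\,\eta(t)^{-3/2}$, so the mass density is bounded below on $[t_0/2,T]\times\T^3$. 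Since $\gamma+2s\in[0,2]$, the conditions~\eqref{e:hydro} amount exactly to ``mass density $\le K_0$'' and ``energy density $\le K_0$'' (the $L^p$ condition being vacuous); thus on $[t_0/2,T]$ the solution has mass density bounded above and below and energy density bounded above, which is precisely the input for the self-generating $L^\infty$ bound, the moment and H\"older estimates, and the Schauder estimates of \cite{silvestre2016boltzmann,imbert2018decay,imbert2018schauder,imbert2019smooth}. Running that machinery on $[t_0,T]$ produces $\|(1+|v|)^q D^k f\|_{L^\infty([t_0,T]\times\T^3\times\R^3)}\le C_{q,k,t_0}$ with the stated dependence. (If one prefers to invoke \cite{imbert2019smooth} as a black box, extract the $L^\infty$ bound first, note $\int_{\R^3}f\log f\dd v\le K_0\log^+\|f\|_{L^\infty}$ to recover the entropy upper bound, and apply that theorem verbatim.)

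For part (b) we argue by contraposition: assume $\limsup_{t\to T-}\sup_{x}\int_{\R^3}f\dd v<\infty$ and $\limsup_{t\to T-}\sup_{x}\int_{\R^3}|v|^2 f\dd v<\infty$, and show $f$ extends to $[0,T+\eps)$. Choose $t_1<T$ and $K_0<\infty$ with $\sup_{[t_1,T)\times\T^3}\int_{\R^3}(1+|v|^2)f\dd v\le K_0$, enlarging $K_0$ to cover $[0,t_1]$ as well (on which $f$ is a classical solution with rapidly decaying data, so its low-order moments stay bounded). The hypotheses on $(\gamma,s)$ force $\gamma+2s<2$, so $\max\{2,\gamma+2s\}=2$; and $\gamma+2s\ge0$ makes the $L^p$ condition empty. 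Hence~\eqref{e:hydro} holds on $[0,T)$, and the argument of part (a), applied on $[0,T)$, gives $\|(1+|v|)^q D^k f\|_{L^\infty([\tau,T)\times\T^3\times\R^3)}\le C_{q,k,\tau}$ for every $\tau\in(0,T)$, with constants that remain finite up to $t=T$---the mild dependence on the time horizon recorded in the second remark after Theorem~\ref{t:main}. Fixing $\tau$, it follows that $f(t,\cdot,\cdot)$ converges as $t\uparrow T$ to a function that is $C^\infty$, Schwartz in $v$, and has all weighted derivatives in $L^\infty$; short-time existence from this datum---equivalently, the continuation criterion of \cite{imbert2019smooth}---then extends $f$ past $T$, the extra restriction $\gamma\le0$ (resp.\ $\gamma<0$ for $s\ge1/2$) being inherited from the short-time existence input.

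The main obstacle is the bookkeeping behind the first two paragraphs: checking that ``entropy density bounded above'' is used in \cite{silvestre2016boltzmann,imbert2018decay,imbert2018schauder,imbert2019smooth} only through collision-kernel bounds that the Gaussian lower bound already implies---or, in the black-box route, that the $L^\infty$ estimate itself does not require the entropy bound before one deduces it from $\|f\|_{L^\infty}$. A secondary obstacle is locating a short-time existence result in exactly the regularity and decay class, and the $(\gamma,s)$-range, needed to glue at $t=T$ in part (b).
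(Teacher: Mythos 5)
Your overall architecture coincides with the paper's: on $\T^3$ the hypothesis \eqref{e:mass-core} upgrades to the well-distributed condition, so Theorem \ref{t:main} gives an $x$-uniform Gaussian lower bound and hence a mass lower bound away from $t=0$; one then gets an $L^\infty$ bound, recovers the entropy upper bound from $\int f\log f\dd v\leq \|f\|_{L^\infty_v}\int f \dd v$, applies \cite{imbert2019smooth} as a black box for part (a), and closes part (b) by contraposition plus a short-time existence theorem for polynomially decaying data (\cite{morimoto2015polynomial,HST2019boltzmann}), which is where the restrictions on $\gamma$ come from. But there is a genuine gap at exactly the step you defer as ``the main obstacle'': you never establish that the pointwise lower bound can substitute for the entropy bound in the derivation of the $L^\infty$ estimate. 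In \cite{silvestre2016boltzmann} the entropy upper bound is used (together with mass below and energy above) to produce a cone of nondegeneracy for the Carleman kernel $K_f$, i.e.\ \cite[Lemma 4.8]{silvestre2016boltzmann}, which is the coercivity input for the $L^\infty$ bound. Consequently your black-box route (``extract the $L^\infty$ bound first, then recover the entropy bound and apply \cite{imbert2019smooth} verbatim'') is circular as written: the $L^\infty$ bound you propose to extract is itself proved there under the entropy hypothesis. Your alternative route---auditing \cite{silvestre2016boltzmann,imbert2018decay,imbert2018schauder,imbert2019smooth} to check that the entropy enters only through ``two-sided kernel bounds'' that the Gaussian lower bound supplies ``directly''---is both heavier than needed and not accurate as stated: the entropy enters through a directional lower bound (nondegeneracy), and verifying that the pointwise lower bound yields it is a real geometric argument, not bookkeeping.

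The missing ingredient is precisely the paper's Lemma \ref{l:coercive}: from the Carleman form \eqref{e:kernel} and $f\geq \delta 1_{B_r(v_0)}$ one shows that for every $v$ there is a symmetric set $A(v)\subset\mathbb S^2$ of directions with $|A(v)|\gtrsim (1+|v|)^{-1}$ on which
\[
K_f(v,v')\;\gtrsim\;(1+|v|)^{1+\gamma+2s}\,|v-v'|^{-3-2s},
\]
obtained by noting that the plane $v+(v-v')^\perp$ meets $B_{r/2}(v_0)$ for a strip of directions of width comparable to $r/|v-v_0|$, and that on such planes the integrand contributes $\gtrsim r^2|v-v_0|^{\gamma+2s+1}$. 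With this cone of nondegeneracy replacing \cite[Lemma 4.8]{silvestre2016boltzmann}, the proof of \cite[Theorem 1.2]{silvestre2016boltzmann} goes through (the entropy is used nowhere else there), yielding the entropy-free $L^\infty$ bound; after that, your recovery of the entropy bound, the verbatim application of \cite{imbert2019smooth}, and the gluing argument in (b) with the cited local existence results are exactly the paper's proof. So the proposal is on the right track strategically, but without proving this coercivity lemma it does not yet constitute a proof of the corollary.
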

The extra restrictions on $\gamma$ in part (b) above are inherited from~\cite{morimoto2015polynomial,HST2019boltzmann}; indeed, in its proof, it is necessary to apply a short time existence theorem for initial data with polynomial (rather than exponential or Gaussian) decay in $v$ and, to our knowledge,~\cite{morimoto2015polynomial,HST2019boltzmann} are the only such results.

% come from the need for a local-in-time existence theorem for initial data with polynomial (rather than exponential or Gaussian) decay in $v$ (see \cite{morimoto2015polynomial, HST2019boltzmann}). %The only such results we are available are \cite{morimoto2015polynomial} which requires $s\in (0,1/2)$ and $\gamma \in (-3/2,0]$, and \cite{HST2019boltzmann} which covers $\gamma\in (-3/2-2s,0)$ and any $s\in (0,1)$.

\subsection{Prior lower bounds for the Boltzmann equation}  This topic goes back to Carleman's proof of almost-Gaussian decay in $v$ for a spatially homogeneous hard-spheres model \cite{carleman1933boltzmann}. Still in the homogeneous setting, this was improved to Gaussian decay for cut-off collision kernels by Pulvirenti-Wennberg \cite{pulvirenti1997boltzmann}. Fournier \cite{fournier2001positivity} proved strict positivity for the homogeneous, non-cutoff case, and Mouhot \cite{mouhot2005lowerbounds} derived the first quantitative lower bounds for the inhomogeneous equation (with periodic boundary conditions), obtaining Gaussian decay in the cutoff case, and exponential decay in the non-cutoff case. The more recent work of Imbert-Mouhot-Silvestre \cite{imbert2019lowerbounds} is the first to prove the optimal Gaussian asymptotics for the non-cutoff equation. %, and the current article adapts their strategy (localized in $x$) to spread our lower bounds to large velocities.  {\CH I don't like this phrasing -- it sounds too much like this entire article is just re-doing their thing...}
The current article borrows some techniques from~\cite{imbert2019lowerbounds}.

All results mentioned in the previous paragraph require that the mass density is uniformly bounded from below at every $t$ and $x$ (either as an explicit assumption or by working in the space homogeneous case). % work in a regime where the mass density is bounded from below at every $t$ and $x$ (either by assumption, or in the space homogeneous case, by conservation of mass).
The key feature of our result is that we do not assume any uniform-in-$x$ lower bounds hold, even at $t=0$.

The two works of Briant \cite{briant-arma, briant-krm} are more similar to the present article because they show the instant appearance of exponential lower bounds, despite vacuum regions in the initial data.  Briant's work concerns bounded spatial domains, and his lower bounds depend on the $W^{2,\infty}_v$ norm of $f$; in contrast, the present work applies to solutions in the whole space $\R^3_x$, and our estimates do not depend quantitatively on derivatives of $f$, which is important for the proof of Corollary \ref{c:continuation}. We also note that our proof is substantially simpler than \cite{briant-arma, briant-krm}, although it does not address the setting of mild solutions on domains with boundary as Briant's does.

For more on the history of the Boltzmann equation and its mathematical theory, see \cite{villani2002review, mouhot2018review}.

%Our proofs make essential use of the Carleman decomposition of $Q(f,f)$, which writes $Q$ as the sum of $Q_{\rm s}$, a fractional derivative operator, and $Q_{\rm ns}$, a lower-order convolution term. (See Section \ref{s:prelim}.) This decomposition relies on the cancellation lemma, first discovered in... The recent conditional regularity program of C. Imbert, C. Mouhot, and L. Silvestre (see \cite{silvestre2016boltzmann, imbert2018decay, imbert2018schauder, imbert2019lowerbounds, imbert2019smooth}) uses this decomposition extensively to prove $C^\infty$ regularity of $f$, conditional on uniform control of the mass, energy, and entropy densities from above, and the mass density from below. The lower bound result \cite{imbert2019lowerbounds} referenced above is part of that program. Our Theorem \ref{t:main} allows one to remove the assumptions of entropy bounded above and mass bounded below from all of the results in \cite{silvestre2016boltzmann, imbert2018decay, imbert2018schauder, imbert2019lowerbounds, imbert2019smooth}, as long as the initial data satisfies \eqref{e:mass-core}.

\subsection{Method of proof}\label{s:method}  Briefly, the proof of Theorem \ref{t:main} consists of the following five steps: (1) Propagate the lower bound \eqref{e:mass-core} from $t=0$ to small positive times. (2) Spread the lower bounds from velocities near $v_0$ to all velocities, for small $t$ and $x\approx x_0$. (3) Propagate lower bounds to $(t_1,x_1)$ with $x_1\in\R^3$ arbitrary, and $t_1$ sufficiently small, along trajectories with $x \sim x_0 + t v_1$, using the lower bounds for $f$ near $v_1 = (x_1-x_0)/t_1$. (4) Repeat Step 2 to spread lower bounds to all velocities at $(t_1,x_1)$. (5) Repeat the process to obtain lower bounds up to time $t=T$.

Steps 1 and 3 are achieved by a barrier argument that propagates lower bounds along characteristics of the free transport equation (Lemma \ref{l:push}), and Step 2 is accomplished by adapting the strategy of \cite{imbert2019lowerbounds} to handle lower bounds that are not uniform in $x$ (see Lemma \ref{l:spread}).

\subsection{Comparison with Landau equation}

%\sout{ The Landau equation arises as a limit of the Boltzmann equation as grazing collisions predominate, and is a useful model in plasma physics. The Landau collision operator $Q_L(f,f)$ is a second-order differential operator with nonlocal coefficients, whereas the Boltzmann collision operator is a fractional-order integro-differential operator with a kernel that is itself nonlocal.}

Lower bounds analogous to our Theorem \ref{t:main} were proven for the Landau equation in \cite{HST2018landau}, using a probabilistic method. The argument followed the same steps outlined in Subsection \ref{s:method}, but instead of barriers, the proof proceeded by analyzing the expectation of a stochastic process, which was related to $f$ by a formula of Feynmann-Kac type.  A probabilistic proof should be possible for the Boltzmann equation as well, but one would have to contend with jumps in the corresponding stochastic process  as $Q(f,\cdot)$ is a nonlocal operator for fixed $f$, unlike the Landau collision operator $Q_L(f,\cdot)$, which is local. %\sout{because the diffusion is nonlocal even at the linear level. A more serious (although technical) difficulty would be to ensure the process is well-defined despite the variable-kernel nonlocal diffusion term. In any case,}  
The proof in the present article is simpler, and handles different ranges of $\gamma$ more easily than the probabilistic proof in \cite{HST2018landau}, which required $\gamma < 0$.

Interestingly, the Gaussian asymptotics of the lower bounds in Theorem \ref{t:main} do not hold in general for the Landau equation: in \cite[Proposition 4.4]{HST2018landau}, we showed that when $\gamma < 0$, for certain initial data, $f$ satisfies \emph{upper} bounds proportional to $\exp\{-c|v|^{2+|\gamma|}\}$ for positive times. The proof of Lemma \ref{l:spread} below (or Lemma 3.4 in \cite{imbert2019lowerbounds}) would fail for the Landau equation at the step of bounding $Q(f,f-\varphi)$ from below at a crossing point (where $\varphi$ is a barrier). %{\CH Both ``kinds'' of nonlocality ($Q(\cdot,\cdot)$ is nonlocal in both of its arguments) are used to obtain a good lower bound for this term. For the Landau equation, one can obtain a good sign for $Q_L(f,f-\varphi)$ at the first crossing point via $D_v^2(f-\varphi)\geq 0$, but there is no hope for a positive lower bound since $Q_L(\cdot,\cdot)$ is only nonlocal in its first argument.} 
 The nonlocality of $Q(\cdot,\cdot)$ with respect to both of its arguments is used to obtain a good lower bound for this term. Since $Q_L(\cdot,\cdot)$ is local in its second argument, the analogous lower bound fails for the Landau equation.
 
% For the Landau equation, since $Q_L(f,g)$ is nonlocal with respect to $f$ but not $g$, %(i.e. it is a second-order differential operator with nonlocal coefficients depending on $f$),
%% one can obtain a good sign for $Q_L(f,f-\varphi)$ at the first crossing point via $D_v^2(f-\varphi)\geq 0$, but there is no hope for a positive lower bound.

%%{\color{blue}[NOTE: We could optionally remove the parenthetical part of the last sentence, to be more concise.]}
%{\color{OliveGreen}[I think we should remove it too. We're not here to get into the details of Landau]}. {\CH Removed}  {\CH I think we should cut down on the last sentence as well:  what about ``...a good lower bound for this term; unfortunately, the Landau equation is local in its second argument and a similarly strong lower bound does not hold.''  ???} 

\subsection{Notation} We say a constant is \emph{universal} if it depends only on $\gamma$, $s$, and $K_0$ (if $\gamma + 2s<0$, we additionally allow dependence on $p$ and $P_0$).  We write $C$ to be any positive, universal constant changing line-by-line. Additional dependence is denoted with subscripts, e.g.~$C_r$ depends on universal constants as well as $r$ and may also change line-by-line.

%Often, constants depend on other parameters in a way in which we do not track.  In this case, we display dependence by using the parameter as a subscript (e.g., $C_r$ is a positive constant universal except for additional dependence on $r$ and may change line-by-line). 

We write $\langle a \rangle = \sqrt{1+|a|^2}$ for any vector or scalar $a$.

%, i.e. the constants in \eqref{e:hydro}. {\color{blue}[If we want, we could define ``universal'' right after \eqref{e:hydro}, and use the word universal in the theorem statement.]} We write $A \lesssim B$ to mean $A\leq CB$ for a universal constant $C$, and similarly, $A\approx B$ means $A\lesssim B$ and $B \lesssim A$. We write $\langle a \rangle = \sqrt{1+|a|^2}$ for any vector or scalar $a$.

\subsection{Outline of the paper} In Section \ref{s:prelim}, we review some useful properties of the Carleman decomposition. Section \ref{s:main} contains the proof of Theorem \ref{t:main}, and Section \ref{s:continuation} proves Corollary \ref{c:continuation}.

\section{Preliminaries}\label{s:prelim}

\subsection{Carleman decomposition of $Q(f,g)$.}

By adding and subtracting $g(v)f(v_*')$ inside the integral, we can write
\begin{equation}
Q(f,g) = Q_{\rm s}(f,g) + Q_{\rm ns}(f,g)
\end{equation}
where
\begin{equation}
\begin{split}
	&Q_{\rm s}(f,g) = \int_{\R^3} \int_{{\mathbb S}^2} (g(v')-g(v)) f(v_*') B(|v-v_*|,\sigma)\dd \sigma \dd v_*
		\qquad\text{and}\\
	&Q_{\rm ns}(f,g) = g(v) \int_{\R^3} \int_{{\mathbb S}^2} (f(v_*')-f(v_*)) B(|v-v_*|,\sigma) \dd \sigma \dd v_*.
\end{split}
\end{equation}
Following \cite{silvestre2016boltzmann} (see also \cite{alexandre2000new}), $Q_{\rm s}$ is an integro-differential operator with
 kernel $K_f$:
\begin{lemma}{\cite[Section 4]{silvestre2016boltzmann}}\label{l:Q1}
The operator $Q_{\rm s}(f,g)$ can be written
\begin{equation}\label{e:Q1-new}
Q_{\rm s}(f,g) = \int_{\R^3} (g(v')-g(v)) K_f (v, v') \dd v',
\end{equation}
with kernel  
\begin{equation}\label{e:kernel}
K_f(v,v') = \frac {1} {|v'-v|^{3+2s} }\int_{\{v_*' : (v-v_*')\cdot (v'-v) = 0\}} f(v_*') |v-v_*'|^{\gamma+2s+1} \tilde b(\cos\theta) \dd v_*',
\end{equation}
where $\tilde b$ is uniformly positive and bounded.
%In particular, if $f:\R^3\to \R_+$ is such that, for any $R>0$, there is some $K_0$ with
%\begin{equation}
%\sup_{v \in B_R} \int_{\R^3} |w|^{\gamma+2s} f(v+w) \dd w \leq K_0,
%\end{equation}
%then $K_f(v,v')$ satisfies
%\begin{equation*}
%\int_{B_{2r} \setminus B_r} K_f(v, v+z) \dd z \leq CK_0 r^{-2s},
%\end{equation*}
%for all $r>0$ and $v\in B_R$, where $C>0$ depends only on the collision kernel.
%Furthermore, $K_f(v,v')$ is symmetric about $v$:
%\[ K_f(v,v+z)  = K_f(v,v-z).\]
\end{lemma}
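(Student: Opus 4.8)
The plan is to obtain \eqref{e:Q1-new}--\eqref{e:kernel} directly from the definition of $Q_{\rm s}(f,g)$ by the classical Carleman change of variables: for fixed $v$, one replaces $(v_*,\sigma)\in\R^3\times{\mathbb S}^2$ by the post-collisional pair $(v',v_*')$, where $v'$ ranges over $\R^3$ and, for each $v'$, the variable $v_*'$ is constrained to the two-dimensional plane $E_{v,v'}=\{w:(w-v)\cdot(v'-v)=0\}$ through $v$ orthogonal to $v'-v$. The input is the elementary collisional geometry: translating so that $v=0$ and setting $u=v_*$, $r=|u|$, one has $v'=\tfrac12(u+r\sigma)$ and $v_*'=\tfrac12(u-r\sigma)$, so that $(v'-v)\cdot(v_*'-v)=\tfrac14(|u|^2-r^2)=0$ (hence $v_*'\in E_{v,v'}$), $|v'-v|=r\sin(\theta/2)$, $|v-v_*'|=r\cos(\theta/2)$, and $|v-v_*|^2=|v'-v|^2+|v-v_*'|^2$. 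These relations also exhibit the inverse map, $v_*=v'+v_*'-v$ and $\sigma=(v'-v_*')/|v'-v_*'|$, which is well defined off the null set $\{v'=v\}$, and they let us rewrite $B$ in terms of $|v'-v|$ and $|v-v_*'|$ alone.

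Next I would compute the Jacobian in two stages. Holding $\sigma$ fixed, the map $v_*\mapsto v'$ has derivative $\tfrac12\bigl(\Id+\sigma\otimes\tfrac{v_*-v}{|v_*-v|}\bigr)$, whose determinant is $\tfrac18(1-\cos\theta)=\tfrac14\sin^2(\theta/2)$ by the rank-one update formula; hence $\dd v_*\dd\sigma=4\sin^{-2}(\theta/2)\dd v'\dd\sigma$, where now $\sin(\theta/2)=\sigma\cdot\frac{v'-v}{|v'-v|}$. Then, holding $v'$ fixed, a direct computation in coordinates with $v'-v$ along a coordinate axis (parametrizing ${\mathbb S}^2$ over a disk) gives $\dd\sigma=|v'-v|^{-2}\sin^3(\theta/2)\dd v_*'$ with $v_*'\in E_{v,v'}$. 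Multiplying and using $\sin(\theta/2)=|v'-v|/|v-v_*|$,
\[
\dd v_*\dd\sigma=\frac{4\sin(\theta/2)}{|v'-v|^2}\dd v'\dd v_*'=\frac{4}{|v'-v|\,|v-v_*|}\dd v'\dd v_*'.
\]

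To assemble the kernel, I would write $\theta^{-2-2s}=\sin^{-2-2s}(\theta/2)\,h(\theta)$ with $h$ positive and bounded on $(0,\pi)$, and again use $\sin(\theta/2)=|v'-v|/|v-v_*|$, obtaining
\[
B(|v-v_*|,\sigma)\dd v_*\dd\sigma=\frac{|v-v_*|^{\gamma+2s+1}}{|v'-v|^{3+2s}}\,\tilde b_1(\cos\theta)\dd v'\dd v_*',
\]
with $\tilde b_1=4h\,\tilde b$ still positive and bounded. After the standard symmetrization of the collision kernel, which confines $\supp\tilde b$ to grazing angles $\theta\in(0,\pi/2)$, the ratio $|v-v_*|/|v-v_*'|=1/\cos(\theta/2)\in[1,\sqrt 2)$ is bounded, so the factor $(|v-v_*|/|v-v_*'|)^{\gamma+2s+1}$ can be absorbed into $\tilde b_1$, replacing the numerator by $|v-v_*'|^{\gamma+2s+1}$. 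Substituting into $Q_{\rm s}(f,g)=\int_{\R^3}\int_{{\mathbb S}^2}(g(v')-g(v))f(v_*')B\dd\sigma\dd v_*$ and performing the inner integration over $v_*'\in E_{v,v'}$ then produces exactly \eqref{e:Q1-new} with kernel \eqref{e:kernel}.

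I expect the main obstacle to be the second Jacobian computation --- the change of variables $\sigma\mapsto v_*'$ restricted to the plane $E_{v,v'}$ --- where the $\sin^3(\theta/2)$ factor must be extracted carefully; by contrast the first Jacobian, the invertibility of the transformation up to a null set, and the bookkeeping of bounded $\theta$-dependent factors absorbed into $\tilde b$ (the only place the grazing-angle symmetrization enters) are all routine.
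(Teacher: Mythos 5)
Your derivation is correct and is essentially the same argument as the source the paper cites for this lemma (\cite[Section 4]{silvestre2016boltzmann}): the paper offers no proof of its own, and the standard Carleman change of variables $(v_*,\sigma)\mapsto(v',v_*')$ with the two-stage Jacobian computation, the identity $\sin(\theta/2)=|v'-v|/|v-v_*|$, and the absorption of bounded angular factors into $\tilde b$ (using the convention, fixed before the splitting into $Q_{\rm s}+Q_{\rm ns}$, that the kernel is supported in $\theta\le\pi/2$) is exactly how \eqref{e:Q1-new}--\eqref{e:kernel} are obtained there. Your Jacobian factors $\tfrac14\sin^2(\theta/2)$ and $|v'-v|^{-2}\sin^3(\theta/2)$ are both correct, so the proposal stands as written.
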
 

Technically, the integral $\int_{\R^3} (g(v') - g(v))K_f(v,v')\dd v'$ is understood in a principal value sense when $s\in [\frac 1 2, 1)$, but we work with functions smooth enough that the integral is always well-defined. Therefore, we abuse notation by omitting the ``p.v.''

%For $f$ satisfying \eqref{e:hydro}, Lemma \ref{l:Q1} easily implies
%\begin{equation}\label{e:annulus}
%\int_{B_{2r} \setminus B_r} K_f(v, v+z) \dd z \leq \Lambda\vv^{(\gamma+2s)_+} r^{-2s},
%\end{equation}
%for all $r>0$ and $v\in \R^3$, with $\Lambda > 0$ depending only on $M_0$, $E_0$, and $P_0$.

We also need the following pointwise upper bound on $Q_{\rm s}(f,g)$:
\begin{lemma}\cite[Lemma 2.3]{imbert2019lowerbounds}\label{l:Q-s-C2}
	If $g$ is bounded and $C^2$, then 
	\[ |Q_{\rm s}(f,g)(t,x,v)|
		\leq C\left(\int_{\R^3} |w|^{\gamma+2s} f(t,x,v-w)\dd w\right)
			\|g\|_{L^\infty(\R^3)}^{1-s}
			\|D_v^2 g\|_{L^\infty(\R^3)}^{s}.\]
\end{lemma}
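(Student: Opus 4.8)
The plan is to work from the Carleman representation of \Cref{l:Q1}, Taylor-expand $g$ to second order about $v$, and optimize over a cutoff radius $\rho>0$ separating grazing from non-grazing collisions. The key preliminary observation is a symmetry of the kernel: writing $v' = v + r\omega$ with $r>0$, $\omega\in\mathbb S^2$, the hyperplane $\{v_*':(v-v_*')\cdot(v'-v)=0\}=v+\omega^\perp$ depends on $\omega$ only through $\omega^\perp$, the prefactor $|v'-v|^{-3-2s}$ depends only on $r$, and—using $v_* = v'+v_*'-v$ and $\sigma=(v'-v_*')/|v'-v_*'|$—one computes $\cos\theta = (|v-v_*'|^2-r^2)/(|v-v_*'|^2+r^2)$, which also does not see the sign of $\omega$. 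Hence $K_f(v,v+r\omega)=K_f(v,v-r\omega)$, so $\int_{\{|v'-v|\le\rho\}}(v'-v)K_f(v,v')\dd v' = 0$ for every $\rho>0$, the integrand being odd over a ball centered at $v$.

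With this, fix $\rho>0$ and write $Q_{\rm s}(f,g)(v) = \int_{\R^3}\bigl(g(v')-g(v)-\mathbf 1_{\{|v'-v|\le\rho\}}\,\nabla_v g(v)\cdot(v'-v)\bigr)K_f(v,v')\dd v'$. On $\{|v'-v|\le\rho\}$, Taylor's theorem gives $|g(v')-g(v)-\nabla_v g(v)\cdot(v'-v)|\le \tfrac12\|D_v^2 g\|_{L^\infty}|v'-v|^2$; on the complement, $|g(v')-g(v)|\le 2\|g\|_{L^\infty}$. Thus $|Q_{\rm s}(f,g)(v)| \le \tfrac12\|D_v^2 g\|_{L^\infty}\int_{\{|v'-v|\le\rho\}}|v'-v|^2 K_f\dd v' + 2\|g\|_{L^\infty}\int_{\{|v'-v|>\rho\}}K_f\dd v'$. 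Passing to coordinates $v'=v+r\omega$ (so $\dd v'=r^2\dd r\dd\omega$) in \eqref{e:kernel} and using $0<c\le\tilde b\le C$, the first integral is bounded by $C\bigl(\int_0^\rho r^{1-2s}\dd r\bigr)\int_{\mathbb S^2}\int_{v+\omega^\perp}f(v_*')|v-v_*'|^{\gamma+2s+1}\dd v_*'\dd\omega$ and the second by $C\bigl(\int_\rho^\infty r^{-1-2s}\dd r\bigr)$ times the same double integral; both $r$-integrals converge precisely because $s\in(0,1)$, yielding factors $\rho^{2-2s}$ and $\rho^{-2s}$. For the inner double integral I would invoke the coarea-type identity $\int_{\mathbb S^2}\int_{\omega^\perp}F(w)\dd w\dd\omega = c\int_{\R^3}|w|^{-1}F(w)\dd w$ (valid for integrable $F$ by rotation invariance and scaling) with $F(w)=f(v-w)|w|^{\gamma+2s+1}$, which gives $C\int_{\R^3}|w|^{\gamma+2s}f(v-w)\dd w =: C\,M(v)$.

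Combining, $|Q_{\rm s}(f,g)(v)|\le C\,M(v)\bigl(\rho^{2-2s}\|D_v^2 g\|_{L^\infty} + \rho^{-2s}\|g\|_{L^\infty}\bigr)$, and choosing $\rho = \bigl(\|g\|_{L^\infty}/\|D_v^2 g\|_{L^\infty}\bigr)^{1/2}$ to balance the two terms produces $|Q_{\rm s}(f,g)(v)|\le C\,M(v)\,\|g\|_{L^\infty}^{1-s}\|D_v^2 g\|_{L^\infty}^{s}$, which is the claim (the degenerate case $\|D_v^2 g\|_{L^\infty}=0$, i.e.\ $g$ constant, forces $Q_{\rm s}(f,g)\equiv 0$ and is trivial). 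I expect the only genuinely delicate point to be the coarea identity and the careful bookkeeping of powers of $r$; the kernel symmetry that eliminates the first-order term is what keeps the argument clean, but if one preferred to avoid verifying it, the standard cancellation estimate for non-cutoff kernels yields the same bound at the cost of an extra term $\|\nabla_v g\|_{L^\infty}M(v)\lesssim \|g\|_{L^\infty}^{1/2}\|D_v^2 g\|_{L^\infty}^{1/2}M(v)$, which is already of the desired form. (This lemma is \cite[Lemma~2.3]{imbert2019lowerbounds}.)
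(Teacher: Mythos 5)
Your argument is correct and is essentially the standard proof of this estimate from the cited reference \cite[Lemma 2.3]{imbert2019lowerbounds} (the paper itself only quotes the lemma without proof): the kernel symmetry $K_f(v,v+w)=K_f(v,v-w)$, the second-order Taylor bound on $\{|v'-v|\le\rho\}$, the bounds $\int_{|v'-v|\le \rho}|v'-v|^2K_f\dd v'\lesssim \rho^{2-2s}M(v)$ and $\int_{|v'-v|>\rho}K_f\dd v'\lesssim \rho^{-2s}M(v)$ via the identity $\int_{\mathbb S^2}\int_{\omega^\perp}F\dd w\dd\omega=2\pi\int_{\R^3}|w|^{-1}F(w)\dd w$, and the optimization $\rho=(\|g\|_{L^\infty}/\|D^2_vg\|_{L^\infty})^{1/2}$ are exactly the ingredients used there, and your handling of the principal value (for $s\ge 1/2$) through the symmetric compensator is sound. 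One small quibble with your closing aside only: the cancellation-free route produces a first-order term of size $\rho^{1-2s}\|\nabla_v g\|_{L^\infty}M(v)$, and it is this factor $\rho^{1-2s}$, combined with $\|\nabla_v g\|_{L^\infty}\lesssim\|g\|_{L^\infty}^{1/2}\|D_v^2g\|_{L^\infty}^{1/2}$ and the same choice of $\rho$, that makes it of the desired form $\|g\|_{L^\infty}^{1-s}\|D_v^2g\|_{L^\infty}^{s}$; the bare product $\|g\|_{L^\infty}^{1/2}\|D_v^2g\|_{L^\infty}^{1/2}$ matches only when $s=1/2$.
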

For $f$ satisfying \eqref{e:hydro}, Lemma \ref{l:Q-s-C2} easily implies
\begin{equation}\label{e:Qs-bound}
|Q_{\rm s}(f,g)(t,x,v)|  \leq \Lambda\vv^{(\gamma+2s)_+} \|g\|_{L^\infty(\R^3)}^{1-s} \|D_v^2 g\|_{L^\infty(\R^3)}^{s},
\end{equation}
with $\Lambda > 0$ depending only on $K_0$ and $P_0$.

The second term in this decomposition is a lower-order convolution term, thanks to the Cancellation Lemma (see \cite{alexandre2000new, alexandre2000entropy, villani1999boltzmann}). We quote from \cite{silvestre2016boltzmann} for convenience:
\begin{lemma}{\cite[Lemmas 5.1 and 5.2]{silvestre2016boltzmann}}\label{l:Q2}
There is a constant $C>0$ such that %The term $Q_{\rm ns}(f,g)$ can be written %{\CH Should we cite something older than this?  Some of the hardcore kinetic people might get upset at us attributing the cancellation lemma to Luis...}
\[
	Q_{\rm ns}(f,g) = C g(v) \int_{\R^3} |z|^\gamma f(v-z) \dd z.
\]
%where $C>0$ depends only on $\gamma$, and the collision kernel.
\end{lemma}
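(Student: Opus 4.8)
\textbf{Proof plan for Lemma \ref{l:Q2} (the Cancellation Lemma form of $Q_{\rm ns}$).}
The goal is to show that, for a fixed function $f$, the ``non-singular'' piece
\[
Q_{\rm ns}(f,g) = g(v) \int_{\R^3}\int_{{\mathbb S}^2} \bigl(f(v_*') - f(v_*)\bigr) B(|v-v_*|,\sigma)\dd\sigma\dd v_*
\]
reduces, after integrating out $\sigma$, to a simple convolution $C\, g(v) \int_{\R^3} |z|^\gamma f(v-z)\dd z$. Since $g(v)$ factors out of the $(\sigma, v_*)$ integral entirely, the whole content is the scalar identity
\[
\int_{\R^3}\int_{{\mathbb S}^2} \bigl(f(v_*') - f(v_*)\bigr) B(|v-v_*|,\sigma)\dd\sigma\dd v_* = C\int_{\R^3} |z|^\gamma f(v-z)\dd z,
\]
which is exactly the classical Cancellation Lemma of Alexandre--Desvillettes--Villani--Wennberg. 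So the plan is simply to recall/reproduce that computation, or to cite it; I would present it as follows.

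First I would fix $v$ and, for each fixed $\sigma \in {\mathbb S}^2$, change variables in the $v_*$ integral from $v_*$ to $v_*'$. Recalling that $v_*' = \frac{v+v_*}{2} - \sigma\frac{|v-v_*|}{2}$, one computes that the map $v_* \mapsto v_*'$ (with $v,\sigma$ fixed) has Jacobian determinant $\tfrac{1}{8}(1 - \sigma\cdot \tfrac{v-v_*}{|v-v_*|}) = \tfrac14 \sin^2(\theta/2)$ — this is the standard grazing-collision Jacobian, and the key point is that it is bounded away from the singular regime, so the change of variables is licit. Under this substitution the ``gain'' term $\int f(v_*') B \dd v_*$ becomes $\int f(v_*') |v - v_*|^\gamma \theta^{-2-2s}\tilde b(\cos\theta) \, |\text{Jac}|^{-1}\dd v_*'$, and one re-expresses $|v-v_*|$ and $\theta$ in terms of the new variable: $|v - v_*| = |v - v_*'|/\cos(\theta/2)$ and $\cos\theta$ is determined by $\sigma$ and the direction of $v-v_*'$. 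The ``loss'' term $\int f(v_*) B\dd v_*$ is left as is. Subtracting, the difference of the two integrands — both now integrals of $f$ against a kernel depending only on $|v-v_*|$ (resp.\ $|v-v_*'|$) and $\theta$ — collapses, after the $\sigma$ integration, to a single convolution kernel $\Phi(z)$ evaluated at $z = v - v_*$ (resp.\ $z = v - v_*'$, renamed). By scaling (the kernel $B$ is homogeneous of degree $\gamma$ in $|v-v_*|$ and the angular factor integrates to a finite constant thanks to $\tilde b$ bounded — here the non-cutoff singularity $\theta^{-2-2s}$ does \emph{not} cause a problem because the $\theta\to 0$ divergence is exactly cancelled by the vanishing of $\sin^2(\theta/2)$ coming from the Jacobian, which is the whole ``cancellation'' phenomenon), one finds $\Phi(z) = C|z|^\gamma$ for an explicit dimensional constant $C = C(\gamma,s,\tilde b)$.

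Concretely, rather than reproduce the full computation I would cite \cite[Lemmas 5.1 and 5.2]{silvestre2016boltzmann} (equivalently the original \cite{alexandre2000new, alexandre2000entropy, villani1999boltzmann}) for the identity, since the paper only needs the \emph{form} of the result, and remark that the constant $C$ is finite precisely because of the angular cancellation described above. The one point worth making explicit for the reader is that $Q_{\rm ns}(f,g)$ is genuinely a zeroth-order operator in $g$ — it is pointwise multiplication of $g$ by the convolution $C(|z|^\gamma * f)(v)$ — which is why it is ``lower order'' compared to the integro-differential operator $Q_{\rm s}(f,g)$ of Lemma \ref{l:Q1}. The main (and essentially only) obstacle is carefully tracking the change of variables $v_*\mapsto v_*'$ and verifying that the Jacobian factor absorbs the grazing singularity; everything else is bookkeeping and a scaling argument, and it is standard enough that a citation suffices.
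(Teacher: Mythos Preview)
Your proposal is correct and matches the paper exactly: the paper gives no proof of this lemma and simply cites \cite[Lemmas 5.1 and 5.2]{silvestre2016boltzmann} together with the original Cancellation Lemma references \cite{alexandre2000new, alexandre2000entropy, villani1999boltzmann}, just as you recommend. One small correction to your heuristic sketch: the Jacobian of $v_*\mapsto v_*'$ (with $v,\sigma$ fixed) is $\tfrac14\cos^2(\theta/2)$, not $\tfrac14\sin^2(\theta/2)$, and since it enters as $|\mathrm{Jac}|^{-1}$ it does not itself damp the grazing singularity; the actual cancellation is that after the change of variables the gain and loss angular kernels agree to order $O(\theta^2)$ as $\theta\to 0$, so it is their \emph{difference} that becomes integrable.
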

Since $Q_{\rm ns}(f,g) \geq 0$ whenever $g\geq 0$, this term has a good sign in barrier arguments used to prove lower bounds.

\section{Proof of the main result}\label{s:main}

\subsection{Propagating lower bounds forward in time}

The following key lemma is used both to preserve a mass core near $(x_0,v_0)$ for short times (which corresponds to $\tau = 1$ in the statement of the lemma), and to push lower bounds to different locations in $x$ via free transport.

\begin{lemma}\label{l:push}
Fix $\tau>0$ and $f$ solving~\eqref{e:Boltzmann} on $[0,T]\times \R^3 \times \R^3$. If $f(0,x,v) \geq \delta 1_{\{|x-x_0|<r, |v-v_0|<r/\tau\}}$ for some $(x_0,v_0)\in\R^6$ and $\delta, r>0$, then the lower bound
\[ f(t,x,v) \geq \frac \delta 2\]
holds whenever $0\leq t\leq \min\{T,\tau\}$ and, for a universal constant $C_{\ref{l:push}}$,
\begin{equation}\label{e:txv}
 \frac {|v-v_0|^2}{r^2/\tau^2} + \frac{|x-x_0 - tv|^2}{r^2} < \frac {1} 4 \quad \mbox{ and } \quad t < \frac{C_{\ref{l:push}} r^{2s}} { \tau^{2s} \langle |v_0| + r/\tau\rangle^{(\gamma+2s)_+} }.
 \end{equation}
\end{lemma}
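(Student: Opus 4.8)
The plan is to set up a barrier (subsolution) for the Boltzmann equation that is supported in a shrinking-in-$v$, drifting-in-$x$ ellipsoidal region, and whose value stays above $\delta/2$ on the region described in \eqref{e:txv}. Write the Boltzmann equation using the Carleman decomposition $Q(f,f) = Q_{\rm s}(f,f) + Q_{\rm ns}(f,f)$. Since $Q_{\rm ns}(f,f) \geq 0$ whenever $f \geq 0$ (Lemma \ref{l:Q2}), we may drop that term and it suffices to find $\varphi$ with $\varphi(0,\cdot,\cdot) \leq f(0,\cdot,\cdot)$ and
\[
\partial_t \varphi + v \cdot \nabla_x \varphi - Q_{\rm s}(f,\varphi) \leq 0
\]
pointwise wherever $\varphi$ would otherwise touch $f$ from below; then the comparison principle (valid because $f$ is a classical solution and the region is compact in $v$ for each $(t,x)$, or by a standard first-crossing-time argument) gives $f \geq \varphi$. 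The natural ansatz, following the free-transport structure, is something like
\[
\varphi(t,x,v) = \delta\, h\!\left( \frac{|v-v_0|^2}{r^2/\tau^2} + \frac{|x-x_0-tv|^2}{r^2} \right),
\]
where $h$ is a smooth cutoff with $h \equiv 1$ on $[0,1/4]$, $h \equiv 0$ on $[1,\infty)$, and $0 \leq h \leq 1$; note $\varphi(0,x,v) = \delta h(|v-v_0|^2\tau^2/r^2 + |x-x_0|^2/r^2) \leq \delta 1_{\{|x-x_0|<r,\ |v-v_0|<r/\tau\}}$ as required, since $h$ vanishes outside the unit ball of that quadratic form.

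The first key computation is that $\partial_t \varphi + v\cdot\nabla_x \varphi$ only picks up terms from the $v$-derivative hitting the argument's first summand and from the $x$-variable in $x_0 - x + tv$: the transport operator annihilates $x_0 + tv$ up to the velocity mismatch, so one gets $\partial_t\varphi + v\cdot\nabla_x\varphi = \delta h'(\cdot)\cdot\frac{2(v-v_0)\cdot v}{r^2/\tau^2} \cdot(\text{correction})$ — more precisely the cross terms are controlled by $|v - v_0| \lesssim r/\tau$ on the support, giving $|\partial_t \varphi + v\cdot\nabla_x\varphi| \leq C \delta \|h'\|_\infty \tau/r \cdot |v-v_0| \cdot (\cdots) \lesssim C\delta \|h'\|_\infty/\tau$ (the precise bookkeeping here is routine once one notes that on $\spt\varphi$ we have $|v - v_0| < r/(2\tau)$ and $|x - x_0 - tv| < r/2$). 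The second key estimate is the lower bound on $Q_{\rm s}(f,\varphi)$. Here I would \emph{not} use Lemma \ref{l:Q-s-C2} as an upper bound on $|Q_{\rm s}|$ directly — rather, at a would-be first crossing point $(t,x,v)$ where $f = \varphi$ and $f \geq \varphi$ nearby, one uses \eqref{e:Q1-new}: $Q_{\rm s}(f,\varphi)(v) = \int (\varphi(v') - \varphi(v))K_f(v,v')\dd v'$, split into $|v'-v|$ small and large. For $|v'-v|$ small, a Taylor expansion and the $\gamma+2s<2$ integrability of the kernel against $|v'-v|^2$ gives a term bounded below by $-C\delta\|D_v^2 h\|(\cdots)$ times something like $\langle |v_0| + r/\tau\rangle^{(\gamma+2s)_+} (r/\tau)^{-2s}$ after rescaling in $v$; the far piece $\varphi(v') - \varphi(v) \geq -\varphi(v)$ contributes $-\varphi(v)\int_{|v'-v|\gtrsim r/\tau} K_f \lesssim -C\delta \langle |v_0|+r/\tau\rangle^{(\gamma+2s)_+}(\tau/r)^{2s}$. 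Combining, $-Q_{\rm s}(f,\varphi) \leq C\delta\, \langle |v_0| + r/\tau\rangle^{(\gamma+2s)_+}(\tau/r)^{2s}$ on the support.

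Putting it together, at a first crossing point the desired subsolution inequality $\partial_t\varphi + v\cdot\nabla_x\varphi - Q_{\rm s}(f,\varphi) \leq 0$ holds as long as $C\delta/\tau + C\delta \langle|v_0|+r/\tau\rangle^{(\gamma+2s)_+}(\tau/r)^{2s} \leq$ (the negative drift of $h'$ where $h$ is decreasing). The standard trick is to take $h$ decreasing strictly on $(1/4,1)$ with $|h'| \gtrsim 1$ there, so the inequality becomes a constraint on $t$: we need the crossing to happen before the competition tips, which forces $t \lesssim r^{2s}/(\tau^{2s}\langle|v_0|+r/\tau\rangle^{(\gamma+2s)_+})$ — precisely the second condition in \eqref{e:txv}. (More carefully: one introduces a time-dependent amplitude, $\varphi = \tfrac{\delta}{2}(1 + e^{-\beta t})h(\cdots)$ or $\varphi = \delta(1 - \alpha t)h(\cdots)$, so that $\partial_t$ produces a favorable $-\delta\alpha h$ term that must dominate the error terms for $t$ in the stated range; this is the cleaner route and avoids relying on $h'$ having a sign at the crossing.) The main obstacle is the $Q_{\rm s}$ lower bound: one must be careful that the rescaling $w = (v'-v)\tau/r$ correctly produces the factor $(r/\tau)^{-2s}$ and that the moment $\int |w|^{\gamma+2s}f(v-w)\dd w$ is bounded using \eqref{e:hydro} uniformly, with the $\langle |v_0| + r/\tau\rangle^{(\gamma+2s)_+}$ factor arising from $|v| \leq |v_0| + r/\tau$ on $\spt\varphi$ when $\gamma + 2s \geq 0$ (for $\gamma + 2s < 0$ the $L^p$ bound in \eqref{e:hydro} handles the singularity of $|w|^{\gamma+2s}$ at $w=0$ and no such factor appears — consistent with $(\gamma+2s)_+ = 0$). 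Everything else is bookkeeping on the quadratic form and choice of constants in $h$.
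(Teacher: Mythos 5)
Your overall strategy (barrier supported on the transported ellipsoid, first-crossing argument, sign of $Q_{\rm ns}$, bound on $Q_{\rm s}$ applied to the barrier) is the same as the paper's, but as written the comparison step has a genuine gap, and it is exactly the point that is delicate in the vacuum setting. Your barrier $\varphi=\delta(1-\alpha t)h(\cdot)$ (or $\tfrac\delta2(1+e^{-\beta t})h$) is nonnegative, compactly supported, and vanishes identically on the boundary of its support, while $f$ is allowed to vanish there too. Consequently the ``first crossing'' need not occur at a point where you can derive a contradiction: (i) at $t=0$ your amplitude equals $\delta$, so there is no strict separation from $f(0)\geq\delta$ at the center of the core and you cannot guarantee a strictly positive first-crossing time; and (ii) the touching can occur at a degenerate point where $f=\varphi=0$ on $\partial\{\varphi>0\}$, where your favorable term $-\delta\alpha h$ vanishes and $Q_{\rm s}(f,\varphi)\geq 0$, so the chain of inequalities collapses to $0\geq 0$ and gives no contradiction. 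The paper's barrier $\underline f=-c_1t+c_2\psi(\cdot)$ with $c_2=3\delta/4$ is designed to avoid both problems: the amplitude is strictly below $\delta$ at $t=0$ (so $t_0>0$ by compactness and continuity), and the $-c_1t$ term makes the barrier strictly negative outside the core for $t>0$ and supplies a strictly negative transport derivative $-c_1$ at any touching point, against $Q(f,\underline f)\geq -c_1/2$. An equivalent fix is to prove $f>\varphi-\eps$ and let $\eps\to0$, as in Lemma \ref{l:spread}; without one of these devices your argument does not close.

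Two further points. First, your transport computation is muddled: for your ansatz the transport operator annihilates the argument exactly, since $(\partial_t+v\cdot\nabla_x)(x-x_0-tv)=0$ and the $|v-v_0|^2$ term is independent of $(t,x)$; there is no $O(\delta\|h'\|_\infty/\tau)$ error (your expression with $h'$ hitting $2(v-v_0)\cdot v/(r^2/\tau^2)$ looks like a $v$-derivative, which transport never produces). If you keep that spurious term, your amplitude rate $\alpha$ must absorb a $C/\tau$ contribution and the conclusion only holds for $t\lesssim \tau/C$, strictly weaker than the statement, which allows all $t\leq\min\{T,\tau\}$ subject only to the second condition in \eqref{e:txv}. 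Second, your near/far splitting of $Q_{\rm s}(f,\varphi)$ at the crossing point essentially re-derives Lemma \ref{l:Q-s-C2}/\eqref{e:Qs-bound}; since $Q_{\rm s}(f,\cdot)$ is linear, it is simpler (and is what the paper does) to apply that bound directly to the barrier as a fixed bounded $C^2$ function of $v$, which yields the scaling $c_2\tau^{2s}r^{-2s}\langle|v_0|+r/\tau\rangle^{(\gamma+2s)_+}$ that your bookkeeping also produces and that gives the time restriction in \eqref{e:txv}.
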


\begin{proof}
Consider the function
\begin{equation}\label{e:subsolution}
\underline{f}(t,x,v):= -c_1 t + c_2 \psi\left(1 - \frac{|v-v_0|^2}{r^2/\tau^2} - \frac{|x-x_0-tv|^2}{r^2}\right),
\end{equation}
with $c_1, c_2>0$ chosen later. Here $\psi$ is a smooth approximation of the ``positive part'' function; that is, a smooth, increasing function such that
\begin{equation*}
\psi(s) = \begin{cases}
 0, &\text{if  }  s \leq 0,\\
 s, &\text{if  }  s \geq 1/2.
\end{cases}
\end{equation*}
%{\color{OliveGreen}We want to show $\underline f$ is a subsolution to the linear Boltzmann equation in the sense of Lemma \ref{l:max-principle}. It is clear that
%$\underline f < 0$ if $|v-v_0| \geq r/\tau$, from which it also follows that $\underline f < 0$ if $|v-v_0| < r/\tau$ and $|x-x_0| > 2r + \tau|v_0|$.
%So if $K := B_{2r+\tau|v_0|}(x_0) \times B_{r/\tau}(v_0)$ (which is compact), then $\underline f(t,x,v) < 0$ whenever $(x,v) \in K^{\rm c}$.}
We wish to show that $\underline f$ is a subsolution to the linear Boltzmann equation, at least at points where it is positive. Assume that $(t,x,v)$ is such that $\underline f(t,x,v) > 0$. 
We clearly have $Q_{\rm s}(f,\underline f) = Q_{\rm s}(f, \underline f + c_1 t)$, so that \eqref{e:Qs-bound} and the nonnegativity of $Q_{\rm ns}(f,\underline f)(t,x,v)$ (which holds because $\underline f(t,x,v)>0$) imply
\[Q(f,\underline f)(t,x,v) \geq -\Lambda \vv^{(\gamma+2s)_+} \|\underline f+c_1 t\|_{L^\infty_v(\R^3)}^{1-s}\|D_v^2 \underline f\|_{L^\infty_v(\R^3)}^s, \]
with $\Lambda$ universal. Clearly, $\|\underline f + c_1t\|_{L^\infty_v(\R^3)} = c_2$. Next, for any $v\in \R^3$, with the shorthand $h_r = 1 - \tau^2|v-v_0|^2/r^2 - |x-x_0-tv|^2/r^2$, 
\[ 
\begin{split}
	|D_v^2 \underline f(v)| &= |4 c_2 \psi''(h_r)r^{-4}(\tau^2(v-v_0)-t(x-x_0-tv)) \otimes (\tau^2(v-v_0) - t(x-x_0-tv))\\
	&\qquad  + 2c_2 \psi'(h_r) r^{-2}\delta_{ij} (\tau^2 + t^2)|\\
	&\leq Cc_2 \left(|\psi''(h_r)| (t^2+\tau^2) r^{-2} + |\psi'(h_r)|r^{-2}\tau^2\right)
	\leq Cc_2 r^{-2} \tau^2 .
\end{split}
\]
We have used $t\leq \tau$, and that $\psi''(h_r) = 0$ if $\tau^2|v-v_0|^2/r^2 + |x-x_0-tv|^2/r^2 > 1$. Therefore,
	\[
		Q(f, \underline f)(t,x,v)
			\geq -\Lambda\vv^{(\gamma+2s)_+} c_2 \tau^{2s} r^{-2s} .
	\]
Let $c_1 = 2\Lambda \langle |v_0| + r/\tau \rangle^{(\gamma+2s)_+} c_2\tau^{2s} r^{-2s}$.  Thus, %$c_1 > \Lambda\vv^{(\gamma+2s)_+} c_2 \tau^{2s} r^{-2s}$ for all $v\in B_{r/\tau}(v_0)$, and we find%at $f(0,x,v) \geq \underline f(0,x,v)$, 
%and $c_1 = 2\Lambda \langle |v_0| + r/\tau \rangle^{(\gamma+2s)_+} c_2\tau^{2s} r^{-2s}$. 
\begin{equation}\label{e:Q1-subsolution}
\partial_t \underline f + v \cdot \nabla_x \underline f = -c_1 < Q(f,\underline f) \qquad\mbox{ for all } v\in B_{r/\tau}(v_0).%\underline f(t,x,v) > 0,
\end{equation}
%provided $c_1 > \Lambda\vv^{(\gamma+2s)_+} c_2 \tau^{2s} r^{-2s}$ for all $v\in B_{r/\tau}(v_0)$. We take $c_2 = 3\delta/4$ in \eqref{e:subsolution}, %so that $f(0,x,v) \geq \underline f(0,x,v)$, 
%and $c_1 = 2\Lambda \langle |v_0| + r/\tau \rangle^{(\gamma+2s)_+} c_2\tau^{2s} r^{-2s}$. 

Now, we claim $f > \underline f$ for all $(t,x,v)$ such that $\underline f(t,x,v) > 0$. By choosing $c_2 = 3\delta/4$, this claim is true for $t=0$. If the claim fails, then there is a first crossing point $(t_0,x_0,v_0)$ with $\underline f(t_0,x_0,v_0) >0$, such that $f(t_0,x_0,v_0) = \underline f(t_0,x_0,v_0)$ and $f(t,x,v) > \underline f(t,x,v)$ whenever $\underline f(t,x,v) > 0$ and $t< t_0$. The strict positivity of $t_0$ follows from the compact support of $\underline f(t,\cdot,\cdot)$ for each $t$. We also have $f(t_0,x,v) \geq \underline f(t_0,x,v)$ for all $(x,v)\in \R^6$.

Letting $g = f - \underline f$, we have $\partial_t g(t_0,x_0,v_0) \leq 0$ and $\nabla_x g(t_0,x_0,v_0) = 0$, so that \eqref{e:Q1-subsolution} implies
\begin{equation}\label{e.c1}
	0 \geq (\partial_t + v_0\cdot \nabla_x)g(t_0,x_0,v_0) > Q(f,g).
\end{equation}
Next, since $g(t_0,x,v) \geq 0 = g(t_0,x_0,v_0)$ for all $(x,v)\in\R^6$, we have
\[ Q_{\rm s}(f,g)(t_0,x_0,v_0) = \int_{\R^3} K_f(t_0,x_0,v,v')(g(t_0,x_0,v') - g(t_0,x_0,v_0)) \dd v'\geq 0,\]
and  $Q_{\rm ns}(f,g)(t_0,x_0,v_0) \geq 0$ by Lemma \ref{l:Q2}.  Thus, $Q(f,g) \geq 0$, contradicting~\eqref{e.c1}.

This contradiction implies $f \geq \underline f$ whenever $\underline f(t,x,v) > 0$. The claim then follows by choosing $C = \Lambda/24$ and using the definition of $\underline f$.
%In particular, for $(t,x,v)$ satisfying \eqref{e:txv} with $C = \Lambda/24$, our choice of $\psi$ implies $\psi(3/4) = 3/4$, and
%\[f(t,x,v) \geq \underline f(t,x,v)  > -c_1 t +  \frac{3} 4 c_2\geq -\frac 1 {16} \delta + \frac 9 {16} \delta \geq \frac {\delta} 2.\]
\end{proof}

\subsection{Spreading lower bounds to all velocities}\label{s:all-velocities}

%The most common strategy for proving lower bounds that are global in $v$ is: first, establish an initial plateau (i.e. a lower bound that holds for small velocities). This is provided by 

First, we require a lemma that spreads local lower bounds to a larger domain in $v$, at the cost of shrinking the $x$-domain where the lower bounds hold. The proof strategy and notation are similar to \cite[Lemma 3.4]{imbert2019lowerbounds}. The differences are: first, that our initial lower bound is not uniform in $x$, so we need to include a cutoff in $x$ in our barrier function; and second, that our solution $f$ is not yet known to be strictly positive everywhere, so that we must make our barrier strictly negative for large $v$ and $x$ to control the location of the first crossing point.

\begin{lemma}\label{l:spread}
	Suppose that $f$ is a solution of~\eqref{e:Boltzmann}on $[0,T]\times\R^3\times\R^3$ %For any $T_0\in (0,1]$ {\CH Why do we want $T_0 \leq 1$?  Don't we want it $\leq T$?} and any solution $f$ to \eqref{e:Boltzmann}
	satisfying \eqref{e:hydro} as well as
	\[
		f \geq \ell
			\qquad  \text{on }
			[0,T]\times B_\rho(0)\times B_R(0),
	\]
	for some $\ell>0$, $\rho>0$, and $R\geq 1$, there is a universal constant $c$ such that for any $\xi \in (0, 1-2^{-1/2})$ such that $\xi^q R^{3+\gamma} \ell < 1/2$ with $q = 5 + 2(\gamma+2s)$, there holds
	\[
		f
			\geq c \xi^q R^{3+\gamma} \ell^2\min\{t, R^{2s-(\gamma+2s)_+}\xi^{2s}+ R^{-1}\rho\}
			\qquad \text{ on } [0,T]\times B_{\rho/2}(0)\times B_{\sqrt 2 (1 - \xi)R}.
	\]
\end{lemma}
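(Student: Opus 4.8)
\textbf{Proof proposal for Lemma \ref{l:spread}.}

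The plan is to build a subsolution barrier adapted from \cite[Lemma 3.4]{imbert2019lowerbounds}, modified in two ways to handle the features mentioned before the statement: a cutoff in $x$ (since the initial lower bound $f \geq \ell$ on $B_\rho(0) \times B_R(0)$ is only local in $x$), and a barrier that becomes strictly negative for large $|v|$ and large $|x|$ (so that the first crossing point, if any, is forced to occur at a controlled location rather than escaping to infinity where $f$ is not yet known to be positive). Concretely, I would take a barrier of the form
\[
	\vp(t,x,v) = a(t)\, \chi\!\left(\frac{|x|^2}{(\rho/2)^2}\right) \Big( \phi_\xi(v) - b(t) \langle v \rangle^{-p'} \Big) - (\text{small drift term in } t),
\]
where $\chi$ is a smooth cutoff supported in $B_{\rho/2}$ in the $x$-variable (equal to $1$ on $B_{\rho/4}$, say, and with $|\chi'|,|\chi''| \lesssim 1$), $\phi_\xi$ is the velocity profile from \cite{imbert2019lowerbounds} that equals a positive constant on $B_R(0)$ and is supported in $B_{\sqrt 2 (1-\xi)R}(0)$, and the subtracted term $b(t)\langle v\rangle^{-p'}$ with suitable $p' > 0$ (or alternatively a term growing in $|v|$) makes $\vp$ strictly negative for $|v|$ near the edge of its $v$-support and for $|x|$ near the edge of the cutoff. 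The coefficients $a(t), b(t)$ are to be chosen, with $a(0)$ small enough that $\vp(0,\cdot,\cdot) \leq f(0,\cdot,\cdot)$ wherever $\vp(0,\cdot,\cdot) > 0$ — this uses only the hypothesis $f \geq \ell$ on $B_\rho \times B_R$, which dominates $\vp$ at $t=0$ because $\phi_\xi$ is supported where $f$ is already bounded below.

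Next I would run the comparison argument exactly as in Lemma \ref{l:push}: suppose $\vp > f$ somewhere with $\vp > 0$, take the first crossing time $t_0$ and point $(x_0, v_0)$, where $\partial_t(f-\vp) \leq 0$, $\nabla_x(f-\vp) = 0$, and $f(t_0,\cdot,\cdot) \geq \vp(t_0,\cdot,\cdot)$ globally. The strict negativity of $\vp$ for large $|v|$ or large $|x|$ guarantees the crossing point lies in a compact region; in fact since $f \geq \ell \geq \vp$ on $B_\rho \times B_R$ (because $a\phi_\xi$ was scaled down relative to $\ell$, using $\xi^q R^{3+\gamma}\ell < 1/2$), the crossing $v_0$ must lie in the annulus $R \leq |v_0| \leq \sqrt 2(1-\xi)R$ and $x_0 \in B_{\rho/2}$. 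At this point the equation gives $-\partial_t \vp(t_0,x_0,v_0) \geq -Q(f, f-\vp)(t_0,x_0,v_0)$, and since $g := f - \vp \geq 0$ attains its minimum value $0$ at $(x_0,v_0)$, the splitting $Q = Q_{\rm s} + Q_{\rm ns}$ gives $Q_{\rm ns}(f,g) \geq 0$ (Lemma \ref{l:Q2}) and, crucially,
\[
	Q_{\rm s}(f,g)(t_0,x_0,v_0) = \int_{\R^3} K_f(t_0,x_0,v_0,v')\, g(t_0,x_0,v')\, \dd v' \geq \int_{\R^3} K_f(t_0,x_0,v_0,v')\, \big(f - \vp\big)(t_0,x_0,v')\, \dd v',
\]
and one drops the $f$ part (nonnegative) to bound below by $-\int K_f(\vp)_+\, \dd v'$, or better, one uses that on $B_\rho \times B_R$ one has $f \geq \ell$ to produce a genuinely positive contribution $\gtrsim \ell \int_{B_R} K_f(t_0,x_0,v_0,v')\,\dd v'$ from the region where the barrier is small and $f$ is large. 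The nonlocality of $K_f$ in $v'$ — the integral reaches velocities $v'$ near $B_R(0)$ even though $v_0$ is out near radius $R$ — is precisely what produces a lower bound proportional to $\ell \cdot R^{3+\gamma}\xi^{\text{something}}$ on this term; this is the structural input that fails for Landau, as the authors note. Choosing $a(t)$ to grow linearly, $a(t) \sim c\xi^q R^{3+\gamma}\ell^2 \min\{t, \dots\}$, so that $\partial_t \vp$ is dominated by this positive $Q_{\rm s}$ contribution minus the $Q_{\rm s}(f,\vp)$ error (controlled by \eqref{e:Qs-bound}: $|Q_{\rm s}(f,\vp)| \lesssim \Lambda \langle v \rangle^{(\gamma+2s)_+} \|\vp\|_\infty^{1-s}\|D_v^2\vp\|_\infty^s$, where $\|\vp\|_\infty \sim a$ and $\|D_v^2\vp\|_\infty \sim a/(\xi R)^2$ roughly, giving the factor $R^{-(\gamma+2s)_+}\xi^{2s}R^{2s}$) and minus the commutator-type errors from differentiating the $x$-cutoff $\chi$ (which contribute the $R^{-1}\rho$-type term after balancing, or rather the transport term $v\cdot\nabla_x\chi$ of size $\sim a\langle v\rangle/\rho \sim aR/\rho$ which must be beaten — this is where the $R^{-1}\rho$ inside the minimum comes from), yields the contradiction $\partial_t g(t_0,x_0,v_0) > 0$.

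The main obstacle I anticipate is the bookkeeping of the $x$-cutoff: unlike \cite[Lemma 3.4]{imbert2019lowerbounds}, the transport term $v \cdot \nabla_x \vp$ does not vanish, and near the crossing velocity $|v_0| \sim R$ this term is of order $a \cdot R/\rho$, which is large; it has an unfavorable sign on part of the support of $\nabla_x\chi$. One must either (i) observe that at the crossing point $\nabla_x g = 0$ forces control via a careful choice of where $\chi$ transitions, ensuring the bad region of $\nabla_x\chi$ is where $\phi_\xi$ is already small, or (ii) absorb this term by insisting $t$ (equivalently $a$) is small enough — which is exactly the role of the "$+R^{-1}\rho$" in the minimum: for $t \lesssim R^{-1}\rho$ the linearly-growing $a(t)$ stays small enough that the $Q_{\rm s}$-gain beats the transport loss. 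Getting the exact exponent $q = 5 + 2(\gamma+2s)$ and the power $R^{3+\gamma}$ requires carefully tracking how the lower bound $\int_{B_R} K_f\,\dd v' \gtrsim R^{3+\gamma}$ (from the explicit kernel formula \eqref{e:kernel}, Carleman-sliced over a $2$-plane, with the weight $|v-v_*'|^{\gamma+2s+1}$ and the $|v'-v|^{-3-2s}$ prefactor evaluated at $|v_0 - v'| \sim R$) interacts with the $\xi$-dependence of the support and the $\ell^2$ coming from $f\geq\ell$ feeding both the kernel weight $K_f$ (one factor of $\ell$, since $K_f$ is linear in $f$) and the profile amplitude. This is routine once the barrier is set up correctly, but delicate.
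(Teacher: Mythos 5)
Your proposal follows essentially the same route as the paper's proof: a barrier of the form (time-dependent amplitude)$\times$(velocity plateau cutoff supported strictly inside $B_{\sqrt 2 R}$)$\times$(spatial cutoff), made strictly negative far out to confine the first crossing point, then at the crossing one uses $Q_{\rm ns}\geq 0$, splits off $Q_{\rm s}(f,\varphi_{\xi,R})$ and controls it via \eqref{e:Qs-bound} by $(\xi R)^{-2s}$, absorbs the transport term $v\cdot\nabla_x$ of the spatial cutoff (size $R/\rho$) into the cap on the amplitude's growth --- exactly the origin of the two terms in the $\min$ --- and bounds $Q_{\rm s}(f,f-h)$ below by $\sim \xi^q R^{3+\gamma}\ell^2$ using $f\geq\ell$, $f-h\geq\ell/2$ on $B_R$ and the Carleman-kernel geometry quoted from \cite[Lemma 3.4]{imbert2019lowerbounds}, which is precisely what the paper does (with $-\eps$ and $\eps\to 0$ in place of your decaying tail, and the $e^{-Kt}$ saturation in place of your linear-growth cap). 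The only cosmetic mismatches are your cutoff radii (the velocity plateau should extend to $B_{\sqrt2(1-\xi)R}$, and the spatial cutoff should equal $1$ on $B_{\rho/2}$, to conclude on the stated sets), which do not affect the argument.
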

Note that $\xi < 1-2^{-1/2}$ implies $\sqrt 2 (1-\xi) > 1$.
\begin{proof}
	We may assume that $T\leq 1$.  Otherwise, we may shift $f$ in time to establish the claim on time intervals of the form $[T_0 - 1, T_0]$ for $T_0 \in [1, T]$.
	
	First we construct suitable cutoff functions in $v$ and in $x$. Let $\varphi_\xi(v)$ be a smooth function such that $\varphi_\xi = 1$ in $B_{\sqrt 2(1-\xi)}(0)$,  $\varphi_\xi = 0$ outside $B_{\sqrt 2 (1-\xi/2)}$, and $\|D^2 \varphi_\xi\|_{L^\infty} \leq 10 \xi^{-2}$. Define $\varphi_{\xi,R}(v) = \varphi_\xi(v/R)$. Next, let $\psi_\rho$ be a smooth, radially decreasing function with $\psi_\rho(x) = 1$ in $B_{\rho/2}$ and $\psi_\rho(x) = 0$ outside $B_\rho(0)$ satisfying $|\nabla_x \psi_\rho|\leq 4 \rho^{-1}$.
	
	Applying \eqref{e:Qs-bound}, we have, for some constant $C_1>0$,
	\begin{equation}\label{e:Q-phi-bound}
	 |Q_{\rm s}(f, \varphi_{\xi,R})(t,x,v)| \leq C_1\vv^{(\gamma+2s)_+} (\xi R)^{-2s}.
	 \end{equation}

	Our barrier in the $t$ variable is defined by
	\[ \begin{split} \tilde \ell(t) &:= \alpha \xi^q R^{3+\gamma} \ell^2 \left(\frac{1-e^{-Kt}}{K}\right),
	\qquad\text{where }
	K := 4\sqrt{2}R \rho^{-1} + C_1  \langle \sqrt 2 R\rangle^{(\gamma+2s)_+} (R\xi)^{-2s},\end{split}\]
	and where $\alpha\in (0,1)$ is chosen later. Note that $T\leq 1$ and the smallness condition on $\xi$ together imply $\tilde \ell (t) \leq \ell/2$ for all $t\in [0,T_0]$.
	
	For small $\eps>0$ (later we take $\eps \to 0$), our goal is to prove
	\begin{equation}\label{e:f-lower}
	f(t,x,v) > h(t,x,v) := \tilde \ell(t)\varphi_{\xi,R}(v)\psi_\rho(x) - \eps, \quad 0\leq t \leq T_0, (x,v)\in \R^6.
	\end{equation}
	This inequality holds at $t=0$ because $f\geq 0$ and $\tilde \ell(0) = 0$. If \eqref{e:f-lower} is false, then there is a first crossing point $(t_0,x_0,v_0)$ with $t_0 \in (0,T_0]$, such that $f(t_0,x_0,v_0) = h(t_0,x_0,v_0)$, and $f(t,x,v) \geq h(t,x,v)$ for all $t\leq t_0$ and $(x,v)\in \R^6$. Since $f \geq 0$, the crossing point must satisfy $x_0 \in \supp \psi_\rho$ and $v_0\in \supp \varphi_{\xi,R}$, i.e. $|x_0|\leq \rho$ and $|v_0|\leq \sqrt 2 (1-\xi/2)R$. The strict positivity of $t_0$ follows from the compactness of $\supp \psi_\rho \times \supp \varphi_{\xi,R}$ (recall that $f$ is a classical solution and, hence, continuous up to time $t=0$).

%	We claim that the crossing point must satisfy $|v_0|> R$. Indeed, {\CH if $|v_0| \leq R$, then, recalling that $|x_0| \leq \rho$, we have $f(t_0,x_0,v_0) = h(t_0,x_0, v_0) \leq \ell/2 < \ell \leq f(t_0,x_0, v_0)$.  Here, the first inequality follows by construction of $h$ and the last follows by assumption.  This is a contradiction.}
%	
%	 $|v_0|\leq R$, then, {\CH recalling that $|x_0| \leq \rho$, we 
%	
%	
%	 we cannot have $|x_0|\leq \rho$, because $f \geq \ell$ when $(x,v)\in B_{\rho}\times B_R$ by assumption and $\tilde \ell(t){\color{OliveGreen}\varphi}_{\xi,R}(v)\psi_\rho(x) \leq \ell/2$ everywhere.
	
	%{\CH ?????  As we have already shown that $x_0 \in \supp \psi_\rho$, we have $|x_0| < \rho$, $|v_0| \in (R,\sqrt 2 (1 - \xi/2)R)$. ?????}

	Since $(t_0,x_0,v_0)$ is the first crossing point, we have $\tilde \ell'(t_0)\varphi_{\xi,R}(v_0)\psi_\rho(x_0) = \partial_t h(t_0,x_0,v_0) \geq \partial_t f(t_0,x_0,v_0)$ and $\nabla_x f(t_0,x_0,v_0) = \nabla_x h(t_0,x_0,v_0) = \tilde \ell(t_0)\varphi_{\xi,R}(v_0)\nabla_x \psi_\rho(x_0)$. From this,~\eqref{e:Boltzmann} and the fact that $Q_{\rm ns}(f,f) \geq 0$, we conclude that
	\begin{equation}\label{e:supersoln}
	\tilde \ell'(t_0){\color{OliveGreen}\varphi}_{\xi,R}(v_0) \psi_\rho(x_0) + \tilde \ell(t_0)\varphi(v_0)v_0\cdot \nabla_x\psi_\rho(x_0)\geq Q_{\rm s} (f,f)(t_0,x_0,v_0).
	\end{equation}
	Using the linearity of $Q_{\rm s}(f,\cdot)$,~\eqref{e:Q-phi-bound}, and the fact that $Q_{\rm s}(f,g+\eps) = Q_{\rm s}(f,g)$, we find
	\[ \begin{split}
		Q_{\rm s}(f,f) &= Q_{\rm s}(f,f - h) + \tilde \ell(t) \psi_\rho(x)Q_{\rm s}(f, \varphi_{\xi,R}) %+ \eps Q_{\rm s}(f, \varphi_{\xi,R}),\\
		\geq Q_{\rm s}(f,f-h) - \tilde \ell(t)\psi_\rho(x) C_1 \vv^{(\gamma+2s)_+} (\xi R)^{-2s}.
		\end{split}\]
	
	On the other hand, using that $\tilde \ell'(t) = \alpha\xi^q R^{3+\gamma}\ell^2 - K\tilde \ell(t)$, that $\phi_{\xi,R}, \psi_{\rho} \leq 1$, and that $|\nabla_x \psi_\rho| \leq 4\rho^{-1}$ in~\eqref{e:supersoln}, we find
%	
%	
%	Note also that
%	\[ \tilde \ell'(t) = \alpha\xi^q R^{3+\gamma}\ell^2 - K\tilde \ell(t),\]
%	so that \eqref{e:supersoln} becomes, using $\varphi_{\xi,R}\leq 1$ and $\psi_\rho \leq 1$,
	\[\begin{split}
	 \alpha \xi^q R^{3+\gamma} \ell^2 - K \tilde \ell(t_0)
	 	+ \tilde \ell(t_0) C_1 \langle v_0\rangle^{(\gamma+2s)_+} (\xi R)^{-2s}
		+ 4|v_0| \rho^{-1} \tilde \ell(t_0)
		&\geq Q_{\rm s}(f,f-h)(t_0,x_0,v_0).
	 \end{split} \]
	Now, from $|v_0| \leq \sqrt 2 R$ and the definition of $K$, we cancel terms and obtain
	\[\begin{split}
		&\alpha \xi^q R^{3+\gamma} \ell^2 \geq Q_{\rm s}(f, f- h)(t_0,x_0,v_0)\\
	& = \int_{\R^3} \int_{\{v_*':(v_*'-v_0)\cdot(v_0-v') = 0\}} (f(v') - h(t_0,x_0,v')) f(v_*') \frac{|v'-v_*'|^{\gamma+2s+1}}{|v_0-v'|^{3+2s}} \tilde b(\cos\theta)\dd v_*'  \dd v'\\
	&\geq \int_{B_R} \int_{\{v_*':(v_*'-v_0)\cdot(v_0-v') = 0\}} 1_{B_R}(v_*') (f(v') - h(t_0,x_0,v')) f(v_*') \frac{|v'-v_*'|^{\gamma+2s+1}}{|v_0-v'|^{3+2s}} \tilde b(\cos\theta)\dd v_*'  \dd v'.
	\end{split}\]
	In the last inequality, we used the nonnegativity of the integrand to discard the parts of the integral with $v'\not\in B_R$ and $v_*'\not\in B_R$.
	
	For $v'\in B_R$, we have $(f-h)(t_0,x_0,v')\geq f(t_0,x_0,v') - \tilde \ell(t_0)\psi_\rho(x_0)\varphi_{\xi,R}(v')$. Furthermore, since $|x_0|\leq \rho$, on the domain of integration, we have $f(t_0,x_0,v_*') \geq \ell$, and $f(t_0,x_0,v') - h(t_0,x_0,v') \geq \ell - \tilde \ell(t_0) \geq \ell/2$, so our inequality becomes
	\begin{equation}\label{e:c2}
		\alpha \xi^q R^{3+\gamma} \ell^2
			\geq \frac{\ell^2}{C R^{3+2s}}\int_{B_R} \int_{\{v_*':(v_*'-v_0)\cdot(v_0-v') = 0\}} 1_{B_R}(v_*') |v'-v_*'|^{\gamma+2s+1} \tilde b(\cos\theta)\dd v_*'  \dd v'.
	\end{equation}
	At this point, we may quote verbatim the calculations of \cite[Lemma 3.4]{imbert2019lowerbounds} to obtain
	\begin{equation}\label{e:c3}
		%\alpha \xi^q R^{3+\gamma} \ell^2
		\ell^2 R^{-3-2s}\int_{B_R} \int_{\{v_*':(v_*'-v_0)\cdot(v_0-v') = 0\}} 1_{B_R}(v_*') |v'-v_*'|^{\gamma+2s+1} \tilde b(\cos\theta)\dd v_*'  \dd v'	
			\geq \beta \xi^q R^{3+\gamma} \ell^2,
	\end{equation}
	for some $\beta>0$ independent of the free parameter $\alpha\in (0,1)$.  Choosing $\alpha$ sufficiently small,~\eqref{e:c2} and~\eqref{e:c3} yield a contradiction.  We note that our choice of $\alpha$ is independent of $\eps$.
	
	This establishes $f(t,x,v) > \tilde \ell(t) \psi_\rho(x) \varphi_{\xi,R}(v) - \eps$.   Taking $\eps\to 0$, we conclude the proof; that is, for $t\in [0,T_0], x\in B_{\rho/2}(0), v\in B_{\sqrt 2 (1-\xi)R}(0)$,
	\[ f(t,x,v) \geq \tilde \ell(t) \geq \alpha \xi^q R^{3+\gamma}\ell^2\frac{1-e^{-Kt/2}}{K} \geq c \xi^q R^{3+\gamma}\ell^2\min\{t,K^{-1}\}.\]
\end{proof}

\begin{proposition}\label{p:gaussian}
	Let $f$ be a solution to~\eqref{e:Boltzmann} on $[0,t_0]\times \R^6$ satisfying
	\[
		f \geq \ell
			\qquad \text{ on }
			\{0\leq t \leq t_0,  |x-x_0-tv_0|<r, |v-v_0|<r\}.
	\]
	Fix $\underline t \in (0,t_0)$.  Then there are constants $c_1,c_2>0$ depending only on universal constants, $\underline t$, and $r$, such that %  $\gamma$, $s$, $M_0$, $E_0$, $P_0$, and $G_0$, such that
	\[
		f(t,x,v)
			\geq c_1 \ell e^{-c_2|v-v_0|^2}
			\quad \mbox{ if } t\in [\underline t,t_0], v\in \R^3, |x-x_0-tv_0|<r/2.\] 
\end{proposition}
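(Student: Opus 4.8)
The first move is to use Galilean invariance to reduce to the case $x_0=v_0=0$: the function $\tilde f(t,x,v):=f(t,x+x_0+tv_0,v+v_0)$ again solves \eqref{e:Boltzmann} (the collision operator is Galilean invariant and $(\partial_t+v\cdot\nabla_x)$ is preserved), and the hypothesis becomes $\tilde f\ge \ell$ on $\{0\le t\le t_0,\ |x|<r,\ |v|<r\}$ while the desired conclusion becomes $\tilde f(t,x,v)\ge c_1\ell e^{-c_2|v|^2}$ on $\{t\in[\underline t,t_0],\ v\in\R^3,\ |x|<r/2\}$. One may also assume $t_0\le 1$ after a time translation (as in the proof of \Cref{l:spread}), since the tube hypothesis persists for all $t$. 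I will write $f$ for $\tilde f$ from now on.

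\textbf{The barrier.} The plan is a barrier argument, using the tube $f\ge\ell$ as a fixed "source'' that keeps replenishing mass at velocities near $0$. I would take
\[
\underline f(t,x,v)=\mu(t)\,\psi(x)\,\phi(v)-\eps,
\]
where $\psi$ is a smooth cutoff with $\psi\equiv 1$ on $B_{r/2}(0)$, $\supp\psi\subset B_r(0)$, $|\nabla\psi|\le C/r$; $\phi(v)=e^{-\eta|v|^2}$ (possibly dyadically truncated, see below); $\mu(0)=0$ and $\mu$ increases to its final value $\sim c_1\ell$ by time $\underline t$; and $\eps\to0$ at the end. The strategy is the usual one: at $t=0$ we have $f\ge 0>-\eps=\underline f$, so if $f>\underline f$ fails there is a first crossing point $(\bar t,\bar x,\bar v)$ with $f=\underline f$ there, $f\ge\underline f$ for $t\le\bar t$, and necessarily $\bar x\in\supp\psi$, $\mu(\bar t)>0$; then $\partial_t(f-\underline f)\le0$, $\nabla_x(f-\underline f)=0$ at that point, so using \eqref{e:Boltzmann} and $Q_{\rm ns}(f,f)\ge0$ one needs the contradiction
\[
\partial_t\underline f+\bar v\cdot\nabla_x\underline f< Q_{\rm s}(f,f-\underline f)(\bar t,\bar x,\bar v)+Q_{\rm s}(f,\underline f)(\bar t,\bar x,\bar v).
\]
Since $f-\underline f\ge0$ everywhere and vanishes at $\bar v$, Lemma~\ref{l:Q1} gives $Q_{\rm s}(f,f-\underline f)(\bar v)=\int K_f(\bar v,v')(f-\underline f)(v')\,dv'\ge0$, and restricting to $v'\in B_r(0)$ — where $(f-\underline f)(v')\ge \ell-\mu(\bar t)\ge\ell/2$ provided $\|\mu\|_\infty\le\ell/2$ — together with the formula \eqref{e:kernel} for $K_f$ and the positivity of $\tilde b$, produces a quantitative lower bound of the form $Q_{\rm s}(f,f-\underline f)(\bar v)\ge c\,\ell^2 r^5\langle\bar v\rangle^{\gamma-2}$ after estimating the Carleman hyperplane integral (the hyperplane through a point of $B_r(0)$ normal to $\bar v-v'$ meets $B_r(0)$ in a disk on which $f\ge\ell$). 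Meanwhile $\partial_t\underline f+\bar v\cdot\nabla_x\underline f$ carries the Gaussian factor $e^{-\eta|\bar v|^2}$, and $|Q_{\rm s}(f,\underline f)(\bar v)|\le C\langle\bar v\rangle^{(\gamma+2s)_+}\mu(\bar t)\,\eta^{s}$ by \eqref{e:Qs-bound}. Choosing $\mu$ to solve an ODE $\mu'=(\text{gain})-K\mu$ with $K$ absorbing the transport and $Q_{\rm s}(f,\underline f)$ errors closes the argument for \emph{bounded} velocities.

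\textbf{The dyadic iteration for large velocities — the hard part.} The obstruction is that for $|\bar v|=V\to\infty$ the error $\langle V\rangle^{(\gamma+2s)_+}$ coming from \eqref{e:Qs-bound} outgrows the gain $\langle V\rangle^{\gamma-2}$ available from feeding velocity $\bar v$ out of the slow tube, so a single Gaussian barrier cannot be a subsolution uniformly in $v$. This is exactly where \Cref{l:spread} enters: I would run the barrier argument over dyadic velocity scales $R_j\uparrow\infty$ (growing by the fixed factor $\sqrt2(1-\xi)$ of \Cref{l:spread} at each step, so $\sim\log V$ steps reach $|v|\sim V$), at each step using \Cref{l:spread} — or a version of its barrier argument — with the tube $B_r(0)$ as source and an $x$-cutoff frozen at $B_{r/2}(0)$ (this is how the spatial radius is kept at $r/2$ rather than halving: the tube's spatial radius $r$ exceeds the barrier's $r/2$). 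One then tracks how $\mu_j$, the lower bound attained on $B_{R_j}$, degrades; the key computation is that choosing the \Cref{l:spread} parameters $\xi_j$ and exploiting the $\min\{t,\dots\}$-factor (which is $\gtrsim\underline t$ in the relevant ranges of $\gamma$), the product of the per-step losses over the $\sim\log V$ steps stays above $c_1\ell\,e^{-c_2 V^2}$. Verifying this accumulation — in particular that the rate $c_2$ and the linearity in $\ell$ survive the iteration, so that $c_1,c_2$ depend only on $\gamma,s,K_0$ (and $p,P_0$), $\underline t$, and $r$ — is the main technical obstacle, and is where I expect the bulk of the work to lie; everything else is a routine (if lengthy) barrier computation paralleling \Cref{l:push} and \Cref{l:spread}.
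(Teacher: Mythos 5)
Your overall strategy is the same as the paper's: recenter along the trajectory so the tube becomes a fixed ball, then iterate Lemma~\ref{l:spread} over velocity scales growing by the factor $\sqrt2(1-\xi)$ until $|v|\sim V$ is reached after $\sim\log V$ steps. But the step you defer as ``the main technical obstacle \dots where I expect the bulk of the work to lie'' \emph{is} the proof of the proposition. In the paper it is the explicit recursion obtained from Lemma~\ref{l:spread} with $\xi_n=2^{-n-1}$, $\rho_n=2^{-n}$, $T_n=(1-2^{-n})T_0$, $R_n\approx 2^{n/2}$: namely $\ell_{n+1}=c\,\xi_n^qR_n^{3+\gamma}\ell_n^2\min\{T_{n+1}-T_n,\dots\}\geq c\,2^{-Cn}T_0\,\ell_n^2$, which iterates to $\ell_n\geq u^{2^n}$, and since the bound holds for $|v|\lesssim 2^{n/2}$ this is exactly Gaussian. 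Note the mechanism: the per-step gain is \emph{quadratic} in the running lower bound, so the losses do not accumulate as a product of per-step factors; in particular ``linearity in $\ell$'' does not survive the iteration (the $\ell$-dependence compounds like $\ell^{2^n}\sim\ell^{c|v|^2}$ and is absorbed into the exponential), and the time factor is not $\gtrsim\underline t$ per step, because the step-$n$ bound is only available on $[T_n,T_0]$, so the usable increments $T_{n+1}-T_n=2^{-n-1}T_0$ shrink geometrically (harmless only because of the doubly exponential recursion). Your sketch of the accumulation thus contains misconceptions precisely at the point where the Gaussian is produced, and the computation itself is absent.

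A second concrete gap is the plan to keep the $x$-cutoff ``frozen at $B_{r/2}$.'' This works only for the first step. At step $j$ the crossing point can lie anywhere in the support of the spatial cutoff, and there you must invoke the step-$(j-1)$ lower bound at \emph{all} velocities in $B_{R_{j-1}}$ (both factors $f(v_*')$ and $f(v')-h(v')$ in the Carleman integral are estimated by it, which is where the quadratic gain comes from); but that bound was only produced where the step-$(j-1)$ cutoff equals $1$. The tube alone cannot substitute: it supplies mass only at $|v|<r$, and for a crossing velocity $|\bar v|\gg r$ the Carleman hyperplane through $\bar v$ perpendicular to $v'-\bar v$ with $v'\in B_r(0)$ lies at distance $\approx|\bar v|$ from the origin and misses $B_r(0)$ entirely (this also undercuts your claimed single-step gain $c\,\ell^2r^5\langle\bar v\rangle^{\gamma-2}$). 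One therefore needs strictly nested spatial regions: the paper takes $\rho_n=2^{-n}$, obtains the Gaussian bound essentially at a single point on the trajectory, and then recovers the full region $|x-x_0-tv_0|<r/2$ by rerunning the argument recentered at each such $x$ with $r/2$ in place of $r$. (Alternatively one could use nested radii decreasing from $r$ to $r/2$ with cutoff gradients $\sim 2^j/r$, the extra polynomial-in-$2^j$ loss being absorbed by the quadratic recursion, but that variant also has to be set up and verified, which the proposal does not do.)
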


\begin{proof} 
	First, we recenter around the origin by defining
	\[ \tilde f(t,x,v) = r^{3+\gamma}f(t,x_0+rx+tv_0, v_0+rv),\]
	A direct calculation shows that $\tilde f$ is a solution of the Boltzmann equation with $\tilde f \geq \ell r^{3+\gamma}$ for $(t,x,v) \in [0,t_0]\times B_1(0)\times B_1(0)$.
	
	Let $T_0\in (0,t_0]$ be arbitrary, and define the following sequences for $n\geq 1$:
	\[\begin{split}
	T_n &:= \left(1 - \frac 1 {2^n}\right)T_0,\quad
	\xi_n := \frac 1 {2^{n+1}},\quad
	\rho_n := \frac 1 {2^n}, \quad
	R_n := \sqrt 2 (1-\xi_n)R_{n-1}, \quad\text{and}\quad R_0 = 1.
	\end{split}\]
	Note that $R_n \approx 2^{n/2}$. Letting $\ell_0 = \ell r^{3+\gamma}$, our initial lower bound is $\tilde f\geq \ell_0$ for $t\in [T_0/2,T_0] = [T_1,T_0]$, $x\in B_{\rho_0} = B_1$, and $v\in B_{R_0} = B_1$.
	
	Applying Lemma \ref{l:spread} iteratively, we obtain a sequence of lower bounds $\ell_n>0$ such that $\tilde f \geq \ell_n$ on $[T_n,T_0]\times B_{\rho_n}\times B_{R_n}$. The smallness condition on $\xi_n$ in Lemma \ref{l:spread} holds at every stage because $\xi_n^q R_n^{3+\gamma}\ell_n \leq (2^{-n})^{q - (3+\gamma)/2} < 1/2$. (If necessary, we can replace $\ell_n$ with $\min\{\ell_n,1\}$.)  Notice that% $\ell_{n+1}$ satisfies
	\[ \begin{split}
	\ell_{n+1} &= c \xi_n^q R_n^{3+\gamma} \ell_n^2 \min\{T_{n+1}-T_n, R_n^{-(\gamma+2s)_+ + 2s}\xi_n^{2s} + R_n^{-1}\rho_n\}\\
	&=  c \xi_n^q R_n^{3+\gamma} \ell_n^2 \min\{2^{-n-1}t_0, R_n^{-(\gamma+2s)_++2s} 2^{-2s(n+1)} + R_n^{-1} 2^{-n}  \}
	\geq c 2^{-Cn} T_0 \ell_n^2, 
	\end{split}\]
	for some constants $c,C>0$. Iterating this inequality, we obtain $\ell_n \geq u^{2^n}$ for some $u\in (0,1)$. Since the lower bound $\ell_n$ holds for $|v| \leq C 2^{n/2}$ and $|x|\leq 2^{-n}$, the Gaussian decay of $\tilde f(T_0,0,v)$ follows.
	
	Since $T_0\in (0,t_0]$ was arbitrary, we translate from $\tilde f$ back to $f$ and obtain $f(t,x_0+tv_0,v) \geq c_1 e^{-c_2|v-v_0|^2}$ for all $t\in (0,t_0]$, with $c_1$ and $c_2$ as in the statement of the proposition.
	
	 Applying the above argument with $r/2$ replacing $r$ and arbitrary $x\in B_{r/2}(x_0)$ replacing $x_0$, we conclude $f(t,x+tv_0,v)\geq c_1 e^{-c_2|v-v_0|^2}$ for $t\in (0,t_0]$, and the proof is complete.
\end{proof}
\subsection{Proof of the main theorem}\label{s:vacuum}

\begin{proof}[Proof of Theorem \ref{t:main}] 
	The proof proceeds in five steps.
	
	\textbf{Step 1: sustaining mass for a small time.} By assumption, $f_{\rm in}(x,v) \geq \delta 1_{B_r(x_0)\times B_r(v_0)}$. Lemma \ref{l:push} with $\tau = 1$ implies 
	\[ f(t,x,v) \geq \frac \delta 2, \quad \mbox{ if } |v-v_0|<r/4, |x-x_0-tv_0|<r/4, 0\leq t\leq t_*,\]
	where 
	\[ t_* = \min\left\{\frac 1 2, \frac {C_{\ref{l:push}} r^{2s}}{\langle |v_0|+r\rangle^{(\gamma+2s)_+}},  T \right\}. \]
	In particular, $t\leq 1/2$ implies $|x-x_0-tv|<3r/8$ if $|x-x_0-tv_0|< r/4$.

	{\bf Step 2: spreading mass to all $v$ (localized in $x$) for small times.}  Applying Proposition \ref{p:gaussian} with $\ell = \delta/2$ and  $r/2$ replacing $r$, we obtain
	\begin{equation}\label{e:gaussian}
	 f(t,x,v) \geq c_1\delta e^{-c_2|v-v_0|^2}, \quad 0\leq t\leq t_*, |x-x_0-tv_0|<r/8,
	 \end{equation}
	with $c_1, c_2$ depending on $r$.
	
	{\bf Step 3: spreading mass in $x$ for small times.} Now, fix any $x_1 \in \R^3$ and $t_1$ satisfying 
	\[0<t_1\leq \min\left\{t_*, \frac r {16|v_0|}\right\}.\]
	 The triangle inequality implies that at time $t_1/2$, the lower bound \eqref{e:gaussian} holds for $|x-x_0|< r/16$. Let $v_1 = 2(x_1-x_0)/t_1$, and let $\delta_0>0$ be such that %depending on $r_0$ (as well as $r$, $v_0$, $x_1$, $t_1$, and $\delta$) such that 
	\[ f(t_1/2, x, v) \geq \delta_0, \quad \mbox{ if } |x-x_0|< r_0, |v-v_1|< r_0/t_1,\] 
	where $r_0 = r/16$. We aim to use \Cref{l:push} with $v_0 = v_1$ and $\tau = t_1/2$, applied to $f(t_1/2+t, x,v)$, to propagate this lower bound along trajectories $x \sim x_0 + t v_1$, up to $t=t_1/2$. Therefore, we require
		\begin{equation}\label{e:t1}
		  	%\frac{t_1}{2} < \frac{C r^{2s}}{\tau^{2s} \langle |v_1| + r/\tau\rangle^{(\gamma+2s)_+}}
		  	%\quad\text{or, equivalently,}\quad
		  	t_1^{1+2s}  \left\langle \frac{|x_1-x_0| +r_0}{t_1}\right\rangle^{(\gamma+2s)_+} < C_{\ref{l:push}} r_0^{2s} \qquad \text{(the last condition in \Cref{l:push})}.
		 \end{equation}
If $t_1$ satisfies this inequality, then Lemma \ref{l:push} implies
\[ f(t, x,v) \geq \frac {\delta_0} 2, \quad  \mbox{ if }\frac{t_1}2 <t<t_1, |v-v_1|<\frac {r_0} 4, |x-x_0-t v_1|<\frac {r_0} {4}.\]
 Applying Proposition \ref{p:gaussian} to $f(t_1/2+t,x,v)$, we conclude $f(t_1,x_1,v) \geq c_1 e^{-c_2|v|^2}$, for some $c_1,c_2>0$ depending on $r$, $|x_1-x_0|$, $|v_0|$, and $\delta$.

{\bf Step 4: Extending the lower bound for moderate times.}  On the other hand, if $t_1$ does not satisfy \eqref{e:t1}, choose $\tilde t_1$ sufficiently small depending on $r$ and $|x_1-x_0|$ so that the inequality is satisfied. (This is always possible, since $\gamma<1$.) Proceeding as above, with $\tilde t_1$ replacing $t_1$, we obtain a lower bound at $t=\tilde t_1$, $x$ near $x_1$, and (via Proposition \ref{p:gaussian}) velocities near zero:
 \[ f(\tilde t_1, x, v) \geq \delta_1, \quad \mbox{ if } |x-x_1|<\frac {r_0} 4, |v|< \frac {r_0} 4.\]
 Next, we propagate this forward in time with $v_0 = 0$ and $\tau = 1$ in Lemma \ref{l:push} to obtain
	\[
		f(t,x,v) \geq \frac {\delta_1} 2,
			\qquad \mbox{ if } |x-x_1|< \frac {r_0} {16},
			\ |v|<\frac {r_0} {16},
			\text{ and }
			 t\leq \min\{\tilde t_1 + T_*, T\},
	\]
	where, with $C$ from the last condition in \Cref{l:push},
	\[
		T_*
			:= C_{\ref{l:push}} (r_0/4)^{2s}\langle r_0/4\rangle^{-(\gamma+2s)_+}.
	\]
	As long as $t_1\leq \min\{\tilde t_1 + T_*,T\}$, this lower bound extends up to time $t_1$, and applying Proposition \ref{p:gaussian} to $f(\tilde t_1 + t,x,v)$, we obtain $f(t_1,x_1,v) \geq ce^{-c|v-v_1|^2} \geq C^{-1} c e^{-c|v|^2}$, with $c$ depending on $\delta, r,v_0,t_1$, and $|x_1-x_0|$. In particular, if the initial data is well-distributed, then $c$ can be chosen depending only on $t_1$ and the well-distributed parameters $\delta$, $r$, and $R$.
	
	{\bf Step 5: extending the lower bound for any $t$.}  We have established a Gaussian lower bound on $f(t_1,x_1,v)$, where $x_1\in \R^3$ is arbitrary and $t_1 \leq T_0: =\min\{t_*, r/(16|v_0|), T_*,T\}$. The upper bound for $t_1$ depends only on $|v_0|$ and $r$ (not on $x_1$). Therefore, we have shown	  
%	  Since $t_1\in (0,t_*]$ and $x_1\in \R^3$ were arbitrary, we conclude
	\begin{equation}\label{e:c11}
		f(t,x,v) \geq \mu(t,x) e^{-\eta(t,x)|v|^2}, \quad 0\leq t\leq T_0, x\in \R^3, v\in \R^3.
	\end{equation}
	It is clear from our construction that $\mu$ and $\eta$ are uniformly positive on compact subsets of $(0,T_0]\times \R^3$ and that $T_0$ depends only on $r$ and $v_0$.  Since $f(T_0/2,\cdot,\cdot)$ satisfies~\eqref{e:mass-core} with the same $r$ and $v_0$ (although a different $\delta$), we can repeat steps one through four to obtain~\eqref{e:c11} on $[T_0/2,3T_0/2]$.  Iterating this finitely many times concludes the proof of the first statement in the theorem. %  In particular, for any $L>0$, we have $f(T_0,x,v) \geq \mu_L e^{-\eta_L |v|^2}$ for all $|x|<L$, for some uniform constants $\mu_L, \eta_L>0$.  Applying \Cref{l:push} to $f(T_0+t,x,v)$ with $r=L$, $x_0 = v_0 = 0$, and $\tau = \sqrt L$, {\CH we obtain a uniform lower bound for $f(t,x,v)$ with $|v|<\sqrt L$, $|x|<L$, and $t\leq T_0 + T_L$, for some $T_L$ that can be made arbitrarily large by increasing $L$ (cf.~the last condition in \Cref{l:push}) and then we may reapply Step 2.  This concludes the proof of the first statement in the theorem.
	
	The second statement follows by noticing that the lower bound in Step 3 is uniform in $x$ due to the well-distributed condition, and, hence, all subsequent bounds are independent of $x$. 
	 %depending only on the constant $C$ in Lemma \ref{l:push}. Taking $L$ sufficiently large, the lower bounds can reach the neighborhood of any $(t,x)\in [T_0,T_0+T_1]\times \R^3$ (at the cost of decreasing $\mu_L$ and $\nu_L$). Repeating finitely many times, we obtain a similar lower bound up to $t=T$.  Finally, Proposition \ref{p:gaussian} applied at every $(t,x)\in [T_0,T]\times \R^3$ implies the first statement of the theorem.
	%
%	{\CH CAN MAYBE REMOVE THIS PARAGRAPH.} In the case that $f_{\rm in}$ is well-distributed, the lower bounds at $t=T_0$ are uniform in $x$, so for any $x_1\in \R^3$, we may apply the reasoning of the previous paragraph with $|x-x_1|<1$ replacing $|x|<L$ to establish lower bounds that are uniform in $x$ up to time $t\in[T_0,T]$. {\CH COME BACK TO THIS}
\end{proof}

\section{Upper bounds and continuation}\label{s:continuation}

In this section, we apply the lower bounds of \Cref{t:main} to derive an improved continuation criterion. 
%
%\subsection{Coercivity of $Q_{\rm s}$}\label{s:coercive}
%
First, we show that our pointwise lower bounds for $f$ imply coercivity estimates for the linear operator $Q_{\rm s}(f,\cdot)$. The following lemma plays a similar role to \cite[Lemma A.3]{imbert2016weak} or \cite[Lemma 4.8]{silvestre2016boltzmann}, but the key difference is that here, no bound on the entropy density is used. %\sout{(See also the recent works of Gamba-Alonso on cut-off Boltzmann, where a coercive lower bound for the loss term is proven using pointwise lower bounds for $f$---this also replaced earlier estimates that used entropy.} %{\color{blue} [Need to track down these citations. Also, maybe this parenthetical should go in the introduction.]} {\CH I think it is better here since it is so specific.}) 
%{\color{blue}[Actually, I looked this up and their coercivity estimates aren't that similar to ours--they use upper and lower bounds on the mass and energy, not pointwise lower bounds for $f$--so I think we can forget about citing them.]}

\begin{lemma}\label{l:coercive}
	Let $f:\R^3\to \R$ be a nonnegative function with $f(v) \geq \delta 1_{B_r(v_0)}$ for some $\delta, r > 0$ and $v_0\in \R^3$. Then there exist constants $\lambda, \mu > 0$ (depending on $\delta$, $r$, and $|v_0|$) such that for all $v\in \R^3$, there is a symmetric subset of the unit sphere $A(v)\subset \mathbb S^2$ such that
	\begin{itemize}
		\item $|A(v)|>\mu(1+|v|)^{-1}$, where $|\cdot|$ is the $2$-dimensional Hausdorff measure.
		\item 	$K_f(v,v') \geq \lambda (1+|v|)^{1+\gamma+2s} |v'-v|^{-3-2s}$ whenever $(v-v')/|v-v'| \in A(v)$.
	\end{itemize}
\end{lemma}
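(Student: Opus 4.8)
The plan is to unwind the definition of the kernel $K_f$ from Lemma~\ref{l:Q1} and exploit that the lower bound $f \geq \delta 1_{B_r(v_0)}$ forces the surface integral in~\eqref{e:kernel} to be bounded below whenever the hyperplane $\{v_*' : (v-v_*')\cdot(v'-v) = 0\}$ passes close enough to $v_0$. Recall
\[
K_f(v,v') = \frac{1}{|v'-v|^{3+2s}} \int_{\{v_*' : (v-v_*')\cdot(v'-v)=0\}} f(v_*')\,|v-v_*'|^{\gamma+2s+1}\,\tilde b(\cos\theta)\dd v_*'.
\]
The plane of integration contains $v$ and is orthogonal to the direction $\omega := (v'-v)/|v'-v|$. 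So the geometric question is: for which directions $\omega \in \mathbb S^2$ does this plane intersect $B_r(v_0)$ in a disk of positive radius bounded below? The answer is that we need the signed distance from $v_0$ to the plane, namely $|(v_0 - v)\cdot \omega|$, to be at most $r/2$, say. I would \emph{define} $A(v)$ to be the set of $\omega \in \mathbb S^2$ with $|(v_0-v)\cdot\omega| < r/2$; this is manifestly symmetric (it is invariant under $\omega \mapsto -\omega$), and it is a spherical band of half-width $\sim \arcsin(r/(2|v_0-v|))$ around the great circle orthogonal to $v_0 - v$, so $|A(v)| \gtrsim r/|v_0-v| \gtrsim r(1+|v|)^{-1}$ for $|v_0-v|$ large, and $|A(v)|$ is bounded below by a constant when $|v_0 - v|$ is small; this gives the first bullet with an appropriate $\mu$ depending on $\delta, r, |v_0|$.

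For the second bullet, fix $\omega = (v'-v)/|v'-v| \in A(v)$. The integration plane then meets $B_r(v_0)$ in a Euclidean disk of radius $\sqrt{r^2 - |(v_0-v)\cdot\omega|^2} \geq \sqrt{r^2 - r^2/4} = r\sqrt{3}/2$, so its $2$-dimensional measure is at least $\pi \cdot 3r^2/4$; call this set of good $v_*'$ the disk $D$. On $D$ we have $f(v_*') \geq \delta$. It remains to bound $|v-v_*'|^{\gamma+2s+1}$ from below on $D$ and to control $\tilde b(\cos\theta)$ from below. For the kernel factor: every point of $D$ lies within distance $|v-v_0| + r$ of $v$ and, using that $D$ is a fixed-size disk in the plane through $v$, a positive fraction of $D$ lies at distance comparable to $\max\{r, |v-v_0|\} \sim (1+|v|)$ (shifting $D$'s center is the midpoint-type bound: at least a quarter-disk of $D$ has $|v-v_*'| \geq \tfrac12 |v-v_0|$, while $|v-v_*'| \leq |v-v_0|+r$ everywhere). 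Hence on a subset of $D$ of measure $\gtrsim r^2$, we get $|v-v_*'|^{\gamma+2s+1}\gtrsim (1+|v|)^{\gamma+2s+1}$ when $\gamma+2s+1 \geq 0$ (and $\gtrsim (1+|v|)^{\gamma+2s+1}$ still holds when $\gamma+2s+1<0$, since then we use the \emph{upper} bound $|v-v_*'|\le |v-v_0|+r \lesssim 1+|v|$). As for $\tilde b(\cos\theta)$: by hypothesis $\tilde b$ is uniformly positive and bounded, so $\tilde b(\cos\theta) \geq \tilde b_{\min} > 0$ with no further work. Putting these together, $K_f(v,v') \geq |v'-v|^{-3-2s}\cdot \delta\, \tilde b_{\min}\cdot c\, r^2 (1+|v|)^{\gamma+2s+1}$, which is the claimed bound with $\lambda$ depending on $\delta$, $r$, $|v_0|$.

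The main obstacle — and the only place requiring genuine care — is the geometric bookkeeping that simultaneously (i) keeps the intersection disk $D$ large (needs $\omega$ not too close to the normal direction of the band, which is exactly the defining inequality for $A(v)$), and (ii) secures the lower bound $|v-v_*'| \gtrsim 1+|v|$ on a nontrivial portion of $D$, since the center of $D$ is the foot of the perpendicular from $v_0$ to the plane, which sits at distance $\leq |v-v_0|$ from $v$, so one must argue that shifting within $D$ by $O(r)$ in the direction away from $v$ still produces points at distance $\gtrsim |v-v_0|$; this is a standard "a fixed fraction of a disk is far from a point lying near its center'' estimate but needs the cases $|v-v_0| \leq 10 r$ and $|v-v_0| > 10r$ handled separately. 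I would also double-check the borderline case $\gamma + 2s + 1 < 0$ (possible since $\gamma > -3$) where the exponent is negative and one flips to the upper bound on $|v - v_*'|$ as noted above. Everything else is a direct substitution into~\eqref{e:kernel}.
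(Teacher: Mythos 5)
Your proposal is correct and follows essentially the same route as the paper: you define $A(v)$ as exactly the same band of directions (those for which the integration plane through $v$ passes within $r/2$ of $v_0$), estimate its area as a strip around the equator perpendicular to $v_0-v$, and bound $K_f$ below via the $\gtrsim r^2$-area intersection disk on which $f\geq\delta$ together with $|v-v_*'|^{\gamma+2s+1}\gtrsim_{r,|v_0|}(1+|v|)^{\gamma+2s+1}$. Your explicit case splits (sign of $\gamma+2s+1$, and $|v-v_0|$ small versus large) merely fill in details the paper's shorter argument leaves implicit, so no substantive difference.
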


\begin{proof}
	From \eqref{e:kernel}, letting $w = v_*' - v$, we have
	\[ K_f(v,v') \geq \frac{\delta}{C} \left(\int_{\{w\cdot (v'-v) = 0\}} 1_{B_r(v_0)}(v+w) |w|^{\gamma+2s+1} \dd w\right) |v-v'|^{-3-2s}.\]
	For fixed $v\in \R^3$, the integral in parentheses is only nonzero if the plane $P(v') = v + (v-v')^\perp$ intersects $B_r(v_0)$ nontrivially. To get a uniform positive lower bound, we take $A(v)$ to be the set of directions $(v-v')/|v-v'|$ such that $P(v')$ intersects $B_{r/2}(v_0)$. If $(v-v')/|v-v'| \in A(v)$, then 
	\[
	\begin{split}
	\int_{\{w\cdot (v'-v) = 0\}} 1_{B_r(v_0)}(v+w) |w|^{\gamma+2s+1} \dd w &\geq C^{-1}|v_0-v|^{\gamma+2s+1} |B_r(v_0)\cap \{w\cdot (v'-v) = 0\}| \\
	&\geq C^{-1} r^2 |v_0-v|^{\gamma+2s+1}.
	\end{split}
	\]
	To estimate the size of $A(v)\subset \mathbb S^2$, note that $\omega\in A(v)$ if and only if there is a vector $\beta \perp \omega$ with the angle $\varphi$ between $\beta$ and $v_0-v$ satisfying $|\sin \varphi| \leq (r/2)/|v-v_0|$. This means $\omega$ lives in a strip of width $\approx r |v-v_0|^{-1}$ centered around the equator perpendicular to $v_0-v$ (i.e. the intersection of $(v_0-v)^\perp$ with $\mathbb S^2$). This strip has surface area $\geq C^{-1} r|v-v_0|^{-1}$.
\end{proof}

Now we may follow the proof of \cite[Theorem 1.2]{silvestre2016boltzmann}, with our Lemma \ref{l:coercive} replacing \cite[Lemma 4.8]{silvestre2016boltzmann}, to obtain: 
\begin{proposition}
	Let $f$ be a solution of the Boltzmann equation \eqref{e:Boltzmann} on $[0,T]\times \mathbb T^3\times \R^3$ that satisfies \eqref{e:hydro}. Assume that the initial data $f_{\rm in}$ satisfies~\eqref{e:mass-core} for some $\delta, r>0$, $x_0 \in \mathbb{T}^3$, and $v_0 \in \R^3$. %satisfies $f(0,x,v) \geq \delta 1_{B_r(x_0)\times B_r(v_0)}$ for some $\delta, r>0$, $x\in \mathbb T^3$, $v\in \R^3$. 
	Then $f$ satisfies an $L^\infty$ bound that is uniform away from $t=0$, i.e.
	\[
		\|f(t,\cdot,\cdot)\|_{L^\infty(\mathbb T^3\times \R^3)}
			\leq C_{T, \delta, r,v_0}(1+t^{-3/(2s)}).
	\]
	%{\CH The constant $C$ depends additionally on $T$ and, if $\gamma +2s < 0$, on 
	%\sout{The constant $K$ depends only on $T$, $M_0$, $E_0$, the initial data, and, if $\gamma + 2s < 0$, on}
	If $\gamma + 2s < 0$, $C_{T,\delta,r,v_0}$ depends additionally on
	\[ \sup_{[0,T]\times \mathbb T^3} \int_{\R^3} \vv^q f(t,x,v)^p \dd v,\]
	where $p>3/(3+\gamma+2s)$ and $q = \max\{0, 1 - 3(\gamma+2s)/(2s)\}$.
\end{proposition}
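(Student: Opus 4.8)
The plan is to combine the pointwise Gaussian lower bound from \Cref{t:main} with the coercivity estimate in \Cref{l:coercive} and then feed these into the $L^\infty$ machinery of Silvestre \cite{silvestre2016boltzmann}. First I would invoke \Cref{t:main}: since $f_{\rm in}$ satisfies \eqref{e:mass-core} and, in the spatially periodic case, \eqref{e:mass-core} upgrades to the well-distributed condition of \Cref{d:well} (with parameters $R$, $\delta$, $r$ determined by $|v_0|$, $\delta$, $r$ and the period), \Cref{t:main} gives $f(t,x,v) \geq \mu(t) e^{-\eta(t)|v|^2}$ with $\mu, \eta$ independent of $x$, uniformly positive and bounded on compact subsets of $(0,T]$. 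In particular, for each $t_0 \in (0,T]$ there are $\delta', r' > 0$, depending only on $t_0$, $T$, $K_0$, $P_0$, $\delta$, $r$, $v_0$, such that $f(t,x,v) \geq \delta' 1_{B_{r'}(0)}(v)$ for all $(t,x) \in [t_0,T]\times \mathbb T^3$; here one just picks $r'$ so that $\eta(t_0) (r')^2 \leq \log 2$ and then $\delta' = \mu(t_0)/2$. Note the lower bound holds near $v = 0$, so the $|v_0|$-dependence in \Cref{l:coercive} becomes harmless.

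Next I would apply \Cref{l:coercive} at each such $(t,x)$ with the mass core $\delta' 1_{B_{r'}(0)}$: this produces, for every $v$, a symmetric set $A(v) \subset \mathbb S^2$ with $|A(v)| \gtrsim (1+|v|)^{-1}$ and $K_f(t,x,v,v') \gtrsim (1+|v|)^{1+\gamma+2s}|v'-v|^{-3-2s}$ on the cone over $A(v)$, with implied constants depending only on $\delta'$, $r'$ (hence ultimately on $t_0$ and the data). This is exactly the nondegeneracy hypothesis that \cite[Lemma 4.8]{silvestre2016boltzmann} extracts from a lower entropy bound; the point of \Cref{l:coercive} is that the self-generated lower bound of \Cref{t:main} supplies it directly, bypassing any entropy assumption. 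Then I would run the argument of \cite[Theorem 1.2]{silvestre2016boltzmann} verbatim on the time interval $[t_0, T]$, treating $f(t_0, \cdot, \cdot)$ as initial data: the De Giorgi–type iteration there uses only \eqref{e:hydro} (for the upper-bound control of $Q$ via the Carleman decomposition, \Cref{l:Q1}, \Cref{l:Q-s-C2}, \Cref{l:Q2}) together with the kernel lower bound just established, and yields $\|f(t,\cdot,\cdot)\|_{L^\infty} \leq C(1 + (t-t_0)^{-3/(2s)})$ for $t \in (t_0, T]$. Choosing $t_0 = t/2$ converts this into the stated bound $\|f(t,\cdot,\cdot)\|_{L^\infty} \leq C_{T,\delta,r,v_0}(1 + t^{-3/(2s)})$, since $r'$ and $\delta'$ at level $t_0 = t/2$ remain controlled as $t \to 0^+$ only through the mild $T$-dependence noted after \Cref{t:main}; one must check that the constant does not blow up faster than the stated power of $t$, which follows because the $t_0$-dependence of $\delta', r'$ enters the Silvestre constant polynomially while the singular factor $(t-t_0)^{-3/(2s)} = (t/2)^{-3/(2s)}$ carries the blow-up. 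For the $\gamma + 2s < 0$ case, the extra $L^p_v$ hypothesis in \eqref{e:hydro} is precisely what \cite{silvestre2016boltzmann} requires to bound $\int |z|^\gamma f(v-z)\dd z$ and the analogous terms, giving the stated additional dependence.

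The main obstacle is bookkeeping the dependence of constants: one has to verify that the coercivity constants $\lambda, \mu$ from \Cref{l:coercive}, which a priori depend on $|v_0|$, can be replaced by constants depending only on $t_0$ and the fixed data, using that the self-generated core sits at a fixed location ($v \approx 0$) with parameters controlled by $t_0$; and one has to confirm that plugging $f(t_0,\cdot,\cdot)$ into the Silvestre iteration does not introduce any hidden dependence on $\|f(t_0,\cdot,\cdot)\|_{L^\infty}$ (it does not—the iteration is precisely what produces the $L^\infty$ bound from the $L^1$/$L^p$ bounds and the kernel nondegeneracy). A secondary point is that \Cref{t:main} as stated gives $\mu, \eta$ bounded on compact subsets of $(0,T]$, so strictly the argument produces, for each fixed $t_0$, a bound on $[t_0,T]$; taking $t_0 = t/2$ and tracking the explicit power is what yields the clean statement. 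Everything else is a direct citation of \cite{silvestre2016boltzmann} with \Cref{l:coercive} substituted for its entropy-based coercivity lemma.
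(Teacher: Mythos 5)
Your argument is correct and essentially coincides with the paper's proof: the paper likewise feeds the self-generated lower bound of \Cref{t:main} (uniform in $x$ in the periodic setting, since \eqref{e:mass-core} gives well-distributedness) into \Cref{l:coercive} to produce the cone of nondegeneracy, and then runs the proof of \cite[Theorem 1.2]{silvestre2016boltzmann} verbatim with \Cref{l:coercive} replacing the entropy-based \cite[Lemma 4.8]{silvestre2016boltzmann}. Your extra $t_0=t/2$ slicing is only a bookkeeping device, and the precise $t^{-3/(2s)}$ rate as $t\to 0$ is tracked no more carefully in the paper than in your sketch; what is actually used downstream is the bound uniform away from $t=0$, which both arguments deliver.
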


%Unlike \cite{silvestre2016boltzmann}, this bound does not require a lower bound on the mass density or an upper bound on the 
%Note that the only place that \cite{silvestre2016boltzmann} uses the entropy bound is in obtaining a cone of nondegeneracy as in our Lemma \ref{l:coercive}.
Note that \cite{silvestre2016boltzmann} uses the entropy bound to obtain a cone of nondegeneracy, but does not use the entropy bound anywhere else. Our Lemma \ref{l:coercive} produces a cone of nondegeneracy without using an entropy bound, which allows us to weaken the assumptions.

Combining this $L^\infty$ bound with the basic inequality $s\log s \leq s^2$, we see that for $t\geq t_0$,
\[ H_f(t,x) := \int_{\R^3} f(t,x,v)\log f(t,x,v)\dd v \leq \|f(t,x,\cdot)\|_{L^\infty_v(\R^3)} \int_{\R^3} f(t,x,v)\dd v \leq C,\]
for some $C$ depending only on $t_0$, $K_0$, and the initial data. (Recall that, for $\gamma +2s\in [0,2]$, $K_0$ is an upper bound for the mass and energy densities.)

We have shown that, away from $t=0$, the entropy density $H_f(t.x)$ is bounded from above and the mass density is bounded from below, in terms of only $K_0$ and the initial data. Combining this with \cite[Theorem 1.2]{imbert2019smooth} immediately implies part (a) of Corollary \ref{c:continuation}. Part (b) follows from combining these smoothing estimates with the local existence result of \cite{HST2019boltzmann} (when $\gamma < 0$) or \cite{morimoto2015polynomial} (when $\gamma \leq 0$ and $s\in (0,1/2)$). % Clearly, the above lower bounds also imply $M_f(t,x)$ is bounded below depending only on $M_0$, $E_0$, and the initial data. Then, we can remove the quantities $m_0$ and $H_0$ from the continuation criterion and regularity/decay estimates of \cite{imbert2019smooth}.

\bibliographystyle{abbrv}
\bibliography{boltzmann}
\end{document}